\let\oldsqrt\sqrt
\def\sqrt{\mathpalette\DHLhksqrt}
\def\DHLhksqrt#1#2{%
\setbox0=\hbox{$#1\oldsqrt{#2\,}$}\dimen0=\ht0
\advance\dimen0-0.2\ht0
\setbox2=\hbox{\vrule height\ht0 depth -\dimen0}%
{\box0\lower0.4pt\box2}}
\newcommand{\R}{\mathbb{R}} % reelle Zahlen
\newcommand{\N}{\mathbb{N}} % natuerliche Zahlen
\newcommand{\inn}{\textnormal{int}} % interior ...
\newcommand{\dist}{\textnormal{dist}} % dist ...
\renewcommand{\phi}{\varphi}
\newcommand{\cBa}{{{\mathcal B}_{\alpha}}}
\newcommand{\cL}{{\mathcal L}}
\newcommand{\x}{\bar{x}}
\newcommand{\rn}{\R^N}
\newcommand{\calL}{\mathcal{L}_{\alpha}^1}
\newcommand{\eps}{\varepsilon}
\theoremstyle{definition}
\newtheorem{defi}{Definition}[section]
\newtheorem{remark}[defi]{Remark}
\theoremstyle{plain} %default%plain
\newtheorem{thm}[defi]{Theorem}
\newtheorem{lemma}[defi]{Lemma}
\newtheorem{cor}[defi]{Corollary}
\theoremstyle{definition}
\numberwithin{equation}{section} 
\title{Starshape of the superlevel sets of solutions to equations involving the fractional Laplacian in starshaped rings}
\author{
 \ Sven Jarohs\footnote{Goethe-Universit\"at Frankfurt, Germany, jarohs@math.uni-frankfurt.de.},
 \;   
\ Tadeusz Kulczycki\footnote{Wroclaw University of Science and Technology, Poland, Tadeusz.Kulczycki@pwr.edu.pl,\newline T. Kulczycki was supported in part by the National Science Centre, Poland, grant no. 2015/17/B/ST1/01233}, 
\; and
\ Paolo Salani\footnote{DiMaI, Universit\`a di Firenze, Italy, paolo.salani@unifi.it	 }
}
\date{\today}
\begin{document}
\maketitle

\begin{abstract}
In the present work we study solutions of the problem
\begin{equation}
	\begin{aligned}
		-(-\Delta)^{\alpha/2}u&=f(x,u) &&\text{ in $D_0\setminus \overline{D}_1$}\\
		u&=0&& \text{in $\R^N\setminus D_0$,}\\
		u&=1&& \text{in $\overline{D}_1$,}
	\end{aligned}
\end{equation}
where $D_1, D_0\subset \R^N$ are open sets such that $\overline{D}_1\subset D_0$, $\alpha\in(0,2)$, and $f$ is a nonlinearity.
Under different assumptions on $f$ we prove that, if $D_0$ and $D_1$ are starshaped with respect to the same point $\x\in\overline{D}_1$, then the same occurs for every superlevel set of $u$.
\end{abstract}
{\footnotesize
\begin{center}
\textit{Keywords.}  fractional Laplacian $\cdot$ starshaped superlevel sets
\end{center}
\begin{center}
%\textit{Mathematics Subject Classification:} %(2011)
% 35K58 %semilinear parabolic equations
%$\cdot$ 35B40 %asymptotic behaviour 
\end{center}
}

\section{Introduction}
In this work we investigate for $\alpha\in(0,2)$ the geometry of solutions $u$ to the problem
\begin{equation}\label{main-prob}
	\begin{aligned}
		-(-\Delta)^{\alpha/2}u&=f(x,u) &&\text{ in $D_0\setminus\overline{D}_1$}\\
		u&=0&& \text{in $\R^N\setminus D_0$,}\\
		u&=1&& \text{in $\overline{D}_1$,}
	\end{aligned}
\end{equation}
where $D_1, D_0\subset \R^N$ are open sets such that $\overline{D}_1\subset D_0$ and $f$ is a bounded Borel function on $(D_0\setminus \overline{D}_1)\times [0,1]$. Moreover, $(-\Delta)^{\alpha/2}$ is the fractional Laplacian, which is defined for $f \in \calL$ and $x \in \rn$ by
\begin{equation}
\label{deffrac}
(-\Delta)^{\alpha/2}u(x) =c_{N,\alpha} \lim_{\eps \downarrow 0} \int_{|x-y|>\epsilon} \frac{u(x) - u(y)}{|x - y|^{N + \alpha}} \, dy,
\end{equation}
whenever the limit exists, where $c_{N,\alpha} = 2^{\alpha-2}\pi^{-\frac{N}{2}}\alpha(2-\alpha)\frac{\Gamma(\frac{N+\alpha}{2})}{\Gamma(2-\frac{\alpha}{2})}$ is a normalization constant and by $\calL$ we denote the space of all Borel functions $u: \rn \to \R \cup \{\infty\}$ satisfying
$$
\int_{\rn} \frac{|u(x)|}{(1+|x|)^{N+\alpha}} \, dx < \infty.
$$
It is well-known that $(-\Delta)^{\alpha/2} \varphi(x)$ is well-defined for any $\varphi \in C_c^2(\rn)$ and $x \in \rn$. For more details and basic properties of the fractional Laplacian, we refer the interested reader to the following recent survey papers \cite{AV17,Garofalo}, which contain also comprehensive bibliographies.\\

For $u \in \calL$ we define the distribution $(-\Delta)^{\alpha/2} u$ by the formula
$$
\langle (-\Delta)^{\alpha/2}u, \varphi \rangle = \langle u, (-\Delta)^{\alpha/2} \varphi \rangle \quad\text{ for $\phi\in C^{\infty}_c(\rn)$,}
$$
(cf. Definition 3.7 in \cite{BB1999}). We say that a function $u$ is a {\it{solution}} of (\ref{main-prob}), if $u$ is continuous and bounded on $\rn$, $u = 0$ in $\rn \setminus D_0$, $u = 1$ in $\overline{D}_1$, and $-(-\Delta)^{\alpha/2}u = f(x,u)$ as distributions in $D_0 \setminus \overline{D}_1$, i.e.
$$
\langle -(-\Delta)^{\alpha/2}u, \varphi \rangle = \langle f(x,u),  \varphi \rangle \quad\text{ for $\phi\in C^{\infty}_c(D_0 \setminus \overline{D}_1).$}
$$
In other words the restriction of $-(-\Delta)^{\alpha/2}u$ to $D_0 \setminus \overline{D_1}$ is a function in $L^{\infty}(D_0 \setminus \overline{D_1})$ and we have 
\begin{equation}
\label{weak1}
-(-\Delta)^{\alpha/2}u(x) = f(x,u(x)) \quad \text{for a.e.} \quad x \in D_0 \setminus \overline{D_1}.
\end{equation}

The geometric properties of solutions to equations involving fractional Laplacians have been recently intensively studied. The results concern concavity properties of the first eigenfunction \cite{BKM2006}, \cite{BB2015}, \cite{KS}, concavity properties of solutions of the Dirichlet problem for $(-\Delta)^{1/2} \varphi = 1$ \cite{K2014}, convexity of superlevel sets for some problems for $(-\Delta)^{1/2}$ \cite{NR2015}, convexity properties of solutions of $(-\Delta)^{\alpha/2} u = f(u)$ \cite{G2015}, and general symmetry properties (see e.g. \cite{BLW05,FQT12,JW13,SV14,J16}) in the spirit of Gidas, Ni and Nirenberg \cite{GNN79}.

Here we are interested in the starshapedness of the level sets of solutions in starshaped rings. Then let us introduce some notation and definitions.

We recall that a subset $A$ of $\rn$ is said {\em starshaped with respect to the point} $\x\in A$ if for every $x\in A$ the segment $(1-s)\x+s x$, $s\in[0,1]$, is contained in $A$. If $\x=0$ (as we can always assume up to a translation), we simply say that $A$ is {\em starshaped}, meaning that for every $x\in A$ we have $sx\in A$ for $s\in[0,1]$, or equivalently
\begin{equation}\label{stardef}
A\text{ is starshaped if }\,\,sA\subseteq A\,\,\text{ for every }s\in[0,1]\,.
\end{equation}
$A$ is said {\em strictly starshaped} if $0$ is in the interior of $A$ and any ray starting from $0$ intersects the boundary of $A$ in only one point.
We say that $A$ is {\em uniformly starshaped} if the exterior unit normal $\nu(x)$ exists at each $x\in\partial A$ and there exists $\epsilon>0$ such that $\langle x,\nu(x)\rangle\geq \epsilon\text{ for every }x\in\partial A$. 
 
By $U(\ell)$, $\ell \in \R$ we denote the superlevel sets of a function $u$:
$$
U(\ell):=\{u\geq \ell\}=\{x\in\rn\,:\, u(x)\geq \ell\}\,.
$$

In the formulation of our results we will use the following conditions on $D_0, D_1\subset \R^N$ and $f: (D_0\setminus \overline{D}_1)\times [0,1] \to \R$:
\begin{enumerate}
	\item[(D)] $D_0, D_1\subset \R^N$ are open sets such that $0\in D_1$, $\overline{D}_1\subset D_0$, and $D_0\setminus \overline{D_1}$ satisfies a uniform exterior cone condition.
		\item[(F0)] $f$ is a bounded Borel function on $(D_0\setminus \overline{D}_1)\times [0,1]$.
		\item[(F1)] $t^{\alpha} f(tx,u) \ge f(x,u)$ for every $t \ge 1$ and $(x,u) \in (D_0\setminus \overline{D}_1)\times [0,1]$ such that $tx \in D_0 \setminus \overline{D}_1$;
		\item[(F2)] $f$ is Lipschitz in the second variable i.e. there exists $C > 0$ such that for any $x \in D_0\setminus \overline{D}_1$, $u_1, u_2 \in [0,1]$ we have $|f(x,u_1) - f(x,u_2)| \le C |u_1 - u_2|$.
		\item[(F3)]  $f$ is increasing in the second variable i.e. $f(x,u_1) \le f(x,u_2)$ whenever $u_1 < u_2$ for any $x \in D_0\setminus \overline{D}_1$, $u_1, u_2 \in [0,1]$.
		\item[(F4)] $f$ is a bounded continuous function on $(D_0\setminus \overline{D}_1)\times [0,1]$ and $f(x,0) = 0$ for any $x \in D_0\setminus \overline{D}_1$. 
	\end{enumerate}

Our main result concerning problem \eqref{main-prob} is the following theorem.

\begin{thm}\label{main-thm1}
	Let $D_0, D_1\subset \R^N$ satisfy (D) and $f: (D_0\setminus \overline{D}_1)\times [0,1] \to \R$. We have:
\begin{enumerate}
\item[(i)] Assume $D_0$ and $D_1$ are bounded starshaped sets and $f$ satisfies (F0), (F1), (F2), (F3). If $u$ is a solution of \eqref{main-prob} such that $0 \le u \le 1$ on $D_0\setminus \overline{D}_1$, then the superlevel sets $U(\ell)$ of $u$ are starshaped for $\ell\in (0,1)$.

\item[(ii)] Assume $D_0$ and $D_1$ are bounded, strictly starshaped sets and $f$ satisfies (F0), (F1), (F2). If $u$ is a solution of \eqref{main-prob} such that $0 < u < 1$ on $D_0\setminus \overline{D}_1$, then the superlevel sets $U(\ell)$ of $u$ are strictly starshaped for $\ell\in (0,1)$.

\item[(iii)] Assume $D_0=\rn$, $D_1$ is a bounded starshaped set, and $f$ satisfies (F0), (F1), (F2), (F3). If $u$ is a solution of \eqref{main-prob} such that $0 \le u \le 1$ on $D_0\setminus \overline{D}_1$, then the superlevel sets $U(\ell)$ of $u$ are starshaped for $\ell\in (0,1)$.
\end{enumerate}
\end{thm}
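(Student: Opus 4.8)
The plan is to exploit the scaling structure of the problem together with a comparison/maximum principle for the fractional Laplacian. The key observation is that starshapedness of the superlevel sets $U(\ell)$ is equivalent to the pointwise monotonicity statement
\[
u(tx)\ \ge\ u(x)\qquad\text{for all }t\in[0,1]\text{ and all }x\in\R^N.
\]
Indeed, if this holds and $x\in U(\ell)$, i.e. $u(x)\ge\ell$, then $u(sx)\ge u(x)\ge\ell$ for $s\in[0,1]$, so $sx\in U(\ell)$; conversely starshapedness of every $U(\ell)$ gives the monotonicity. So the whole theorem reduces to proving this scaling inequality for the solution $u$.

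The natural device is to introduce, for $t\in(0,1)$, the rescaled function $u_t(x):=u(tx)$ (defined on the dilated ring $t^{-1}(D_0\setminus\overline D_1)$, extended by the same boundary values $0$ and $1$), and to compare $u_t$ with $u$ on the overlap region. Because $D_0$ and $D_1$ are starshaped, $tD_0\subseteq D_0$ and $tD_1\subseteq D_1$, so the set where both functions ``live'' behaves well: on $\overline D_1$ we have $u=1\ge u_t$ only where $u_t\le 1$ — in fact on $t^{-1}\overline D_1\supseteq\overline D_1$ one has $u_t=1=u$, and on $\R^N\setminus t^{-1}D_0$ one has $u_t=0\le u$. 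The heart of the matter is the interior estimate. Using the scaling identity $(-\Delta)^{\alpha/2}u_t(x)=t^{\alpha}\big((-\Delta)^{\alpha/2}u\big)(tx)$ together with the equation \eqref{weak1} and hypothesis (F1), one gets, on the appropriate subdomain,
\[
-(-\Delta)^{\alpha/2}u_t(x)\;=\;t^{\alpha}f\big(tx,u(tx)\big)\;=\;t^{\alpha}f\big(tx,u_t(x)\big)\;\ge\;f\big(x,u_t(x)\big),
\]
so $u_t$ is a supersolution of the same type of equation satisfied by $u$, with matching or favorable boundary data. Then the Lipschitz hypothesis (F2) (and, for part (i) and (iii), monotonicity (F3)) lets one absorb the difference $f(x,u_t)-f(x,u)$ and conclude $u_t\ge u$ by a maximum principle argument applied to $w:=u-u_t$; concretely one writes $-(-\Delta)^{\alpha/2}w\le f(x,u)-f(x,u_t)\le C(w)_+$ or $\le 0$ where $w\ge 0$, and uses that $w\le 0$ on the complement of the overlap region to force $w\le 0$ everywhere via the fractional maximum principle (using (D), i.e. the exterior cone condition, to guarantee the needed regularity/continuity up to the boundary and applicability of the weak Harnack/antisymmetric maximum principle).

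The main obstacle I anticipate is making the comparison rigorous at the level of distributions, since $u$ is only continuous and bounded and the equation holds only a.e. in the ring: one must check that $u_t$ is an admissible competitor, that the scaling identity for $(-\Delta)^{\alpha/2}$ passes to the distributional formulation on the overlap domain, and — crucially — that the maximum principle applies even though $w$ need not vanish on a full neighborhood of infinity but only outside $D_0$ (which is enough, being an open set containing the ring). Handling the three cases then amounts to tuning this argument: part (ii) uses no monotonicity of $f$ but needs the strict starshapedness to upgrade to strict inclusions, which should follow from a strong maximum principle (if $w\equiv 0$ on an open set then $w\equiv0$, contradicting the strict inequalities at the boundary spheres), and part (iii) is the limiting case $D_0=\R^N$, where $u$ decays at infinity and one repeats the argument with $t^{-1}D_1$ in place of the outer obstacle, the exterior condition at infinity replacing the outer boundary condition. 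The technical core — the fractional maximum principle in the exterior-cone setting and the verification that the rescaled function is a genuine supersolution — is where the real work lies; the geometric reduction to $u(tx)\ge u(x)$ is essentially immediate.
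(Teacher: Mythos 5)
Your overall strategy --- reduction via the elementary equivalence $u(tx)\geq u(x)$ for $t\in[0,1]$ (Lemma~\ref{lmstar}), the scaling identity for $(-\Delta)^{\alpha/2}$, and a comparison through a Schr\"odinger-type maximum principle --- is exactly the one the paper uses (with $u_t(x)=u(x)-u(tx)$ and $t>1$ in place of your $t\in(0,1)$, a mere change of variable). However there is a sign error in your key step that, followed literally, sends the maximum-principle argument in the wrong direction. Hypothesis (F1) is stated only for dilations with factor $\geq 1$; taking $t\in(0,1)$ and substituting $s=1/t>1$, $y=tx$ in (F1) yields
\[
s^{\alpha}f(sy,v)\geq f(y,v)\qquad\Longleftrightarrow\qquad f(x,v)\geq t^{\alpha}f(tx,v),
\]
which is the \emph{opposite} of the inequality you wrote. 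Hence $u_t(x)=u(tx)$ is a \emph{subsolution}, not a supersolution: $-(-\Delta)^{\alpha/2}u_t\leq f(x,u_t)$ on the comparison domain. This matters, because with the (false) supersolution inequality one finds for $v:=u_t-u$ that $(-\Delta)^{\alpha/2}v-\tilde q\,v\leq 0$ with $\tilde q\leq 0$, the wrong sign for Lemma~\ref{wmp}, and nothing follows. With the corrected subsolution inequality one instead gets $(-\Delta)^{\alpha/2}v-\tilde q\,v\geq 0$ where $\tilde q(x)=-[f(x,u_t(x))-f(x,u(x))]/v(x)$ is bounded by (F2) and $\leq 0$ by (F3); together with $v\geq 0$ on the complement, Lemma~\ref{wmp} gives $v\geq 0$, which is the desired conclusion. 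So your plan closes only after the direction of (F1) is corrected and "supersolution'' is replaced by "subsolution''.

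A second, independent gap is in part (ii). You propose a bare appeal to a strong maximum principle plus strict starshapedness, but you do not say how to dispense with (F3). Without monotonicity the Lipschitz quotient $q_t(x)=[f(x,u(tx))-f(x,u(x))]/u_t(x)$ has no sign, and Lemma~\ref{wmp} does not apply directly. The paper's device is a continuity argument: set $t^*=\sup\{s>1:\exists\,x\in D_s,\ u_s(x)\leq 0\}$; if this set is nonempty then by continuity $u_{t^*}\geq 0$ on $D_{t^*}$ with equality at some interior point. Decomposing $F(x,t^*)=f(x,u(t^*x))-f(x,u(x))$ into $F_+-F_-$ and taking $q_{t^*}=-F_-/u_{t^*}$ makes $q_{t^*}\leq 0$ automatic once $u_{t^*}\geq 0$, so Lemma~\ref{wmp} (in its strict form, using that $u_{t^*}>0$ on a set of positive measure) yields $u_{t^*}>0$ in $D_{t^*}$, a contradiction. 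It is this argument, not a strong maximum principle per se, that removes (F3); your sketch misses it.
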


Note that condition (F1) is analogous to condition (21) from \cite{Salani}. Note also that if $D_1=B_r(0)$ for some $r>0$ in Theorem \ref{main-thm1} (iii) and $f$ is independent of $x$, then it is known that $u$ is radial symmetric and decreasing in the radial direction (see \cite[Theorem 1.10 and Corollary 1.11]{SV14}). In particular, the superlevel sets of $u$ are starshaped.

Using Theorem \ref{main-thm1} and N. Abatangelo's result \cite[Theorem 1.5]{A15} we obtain
\begin{thm}
\label{existence-thm}
Let $D_0, D_1\subset \R^N$ be bounded (strictly) starshaped sets satisfying (D), $D_0\setminus \overline{D}_1$ is a $C^{1,1}$ domain and $f: (D_0\setminus \overline{D}_1)\times [0,1] \to \R$ satisfy (F1), (F2), (F3), (F4). 
Then there exists a unique solution $u$ of \eqref{main-prob}. It satisfies $0 < u <1$ on $D_0\setminus \overline{D}_1$ and all superlevel sets $U(\ell)$ of $u$ are (strictly) starshaped for $\ell\in (0,1)$.
\end{thm}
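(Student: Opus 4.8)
The plan is to deduce Theorem~\ref{existence-thm} by combining an existence/uniqueness statement for \eqref{main-prob} with the geometric conclusion of Theorem~\ref{main-thm1}. First I would produce a solution. Since $D_0\setminus\overline{D}_1$ is a bounded $C^{1,1}$ domain and $f$ satisfies (F4) (bounded, continuous, with $f(x,0)=0$) together with the Lipschitz condition (F2) and monotonicity (F3), I would set up the problem as a fixed point: writing $u = v + h$, where $h$ is the $\alpha$-harmonic function in $D_0\setminus\overline{D}_1$ with the prescribed exterior data ($h=1$ on $\overline{D}_1$, $h=0$ on $\R^N\setminus D_0$), whose existence, continuity and boundedness (with $0\le h\le 1$) follow from Abatangelo's result \cite[Theorem 1.5]{A15} applied to the homogeneous equation, and $v$ solves $-(-\Delta)^{\alpha/2}v = f(x,v+h)$ in $D_0\setminus\overline{D}_1$ with zero exterior data. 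The map $w\mapsto v$, where $v$ solves the linear problem with right-hand side $f(\cdot,w+h)$, is then shown to be a contraction (or at least to have a fixed point) in $C(\overline{D_0\setminus\overline{D}_1})$ or in a suitable $L^\infty$ ball, using the $C^{1,1}$-regularity theory and the Lipschitz bound (F2); a standard Banach fixed point or Schauder argument gives a solution $u$, continuous and bounded on $\R^N$.

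Second, I would pin down the sign and uniqueness. For uniqueness, subtract two solutions $u_1,u_2$; the difference $w=u_1-u_2$ vanishes outside $D_0\setminus\overline{D}_1$ and satisfies $-(-\Delta)^{\alpha/2}w = f(x,u_1)-f(x,u_2)$, which by (F2) is bounded by $C|w|$; the maximum principle for the fractional Laplacian on a bounded domain, combined with smallness of the domain or an iteration/Gronwall-type argument on nested subdomains (or simply invoking that the first Dirichlet eigenvalue dominates $C$ after rescaling — but since we cannot shrink the domain, one uses instead the antisymmetric maximum principle or a direct argument with the Green function bound), forces $w\equiv 0$. For the strict bounds $0<u<1$: by (F4), $f(x,0)=0$, so $u\equiv 0$ would be the solution with zero exterior data, but here the exterior data forces $u=1$ on $\overline{D}_1$; comparing $u$ with the $\alpha$-harmonic $h\ge 0$ and using the strong maximum principle (a nonnegative supersolution that is not identically zero is strictly positive in the domain) gives $u>0$; similarly $1-u\ge 0$ satisfies $-(-\Delta)^{\alpha/2}(1-u) = -f(x,u)$ which is bounded, and since $1-u$ is not identically zero in $D_0\setminus\overline{D}_1$ (it vanishes on $\overline{D}_1$ but the exterior zero data on $\R^N\setminus D_0$ pushes it up) the strong maximum principle yields $u<1$.

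Third, having $0<u<1$ on $D_0\setminus\overline{D}_1$, I would simply invoke Theorem~\ref{main-thm1}: part~(i) in the merely-starshaped case gives that all superlevel sets $U(\ell)$, $\ell\in(0,1)$, are starshaped, and part~(ii) in the strictly-starshaped case (for which (F1),(F2),(F0) suffice and (F3) is not even needed) gives that they are strictly starshaped; note (F1) is among the hypotheses of Theorem~\ref{existence-thm}, and (F0) follows from (F4). This is immediate once the previous steps are in place.

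The main obstacle I anticipate is the existence/uniqueness step, specifically getting a genuine solution in the exact distributional sense of \eqref{weak1} (not merely a viscosity or weak-energy solution) with the prescribed inhomogeneous exterior data on two disjoint pieces $\overline{D}_1$ and $\R^N\setminus D_0$: one must check that Abatangelo's theory \cite[Theorem 1.5]{A15}, which is stated for the homogeneous Dirichlet problem with nonzero exterior datum, combines cleanly with the nonlinear perturbation $f(x,u)$ to produce a fixed point that is continuous up to $\partial(D_0\setminus\overline{D}_1)$ and satisfies the equation a.e.\ in the interior. Uniqueness on a fixed (not small) domain also requires care, since (F2) only gives a Lipschitz constant $C$ with no relation to the domain's size; I expect this is handled either via an additional structural hypothesis hidden in the monotonicity (F3) — which, combined with the fact that both $u_1,u_2$ lie in $[0,1]$, lets one run a comparison/ordering argument rather than a contraction — or via a Green-function estimate showing the relevant integral operator has norm $<1$ regardless of the Lipschitz constant once one restricts attention to the physically relevant range of $u$.
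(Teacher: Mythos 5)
Your third step (invoking Theorem~\ref{main-thm1} once $0<u<1$ is known) is exactly what the paper does, and your concern about the distributional formulation of \eqref{weak1} is legitimate and is addressed in the paper via \cite[Theorem 1.4]{A15} (Green representation $u=-\int_D G_D(\cdot,y)f(y,u(y))\,dy+h$) together with \cite[Lemma 5.3]{BB2000} and continuity of the Green potential. But your existence/uniqueness step contains a genuine gap that you yourself flag without resolving.

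You propose to peel off the $\alpha$-harmonic part $h$ using \cite[Theorem 1.5]{A15} for the \emph{linear} problem and then to find the correction $v$ by a Banach contraction or Schauder fixed point. The contraction route fails outright: (F2) provides a Lipschitz constant $C$ with no relation to the domain, so the map $w\mapsto G_D\!\ast f(\cdot,w+h)$ need not be a contraction in $L^\infty$ or $C(\overline{D})$, and no rescaling is available since the domain is fixed. Schauder would give existence but not uniqueness, and your fallback --- ``an ordering argument using (F3)'' --- is left as a conjecture. The missing idea is that \cite[Theorem 1.5]{A15} is a \emph{semilinear} existence-and-uniqueness theorem built on the method of ordered sub- and supersolutions, and that is precisely what (F3) is for. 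The paper extends $f$ to all of $\R$ by constants, observes that $\underline{u}=1_{\overline{D_1}}$ is a subsolution and $\overline{u}=1_{D_0}$ a supersolution of \eqref{main-prob}, and invokes \cite[Theorem 1.5]{A15} once to obtain a unique solution with $0\le u\le 1$. No fixed-point iteration on your part and no smallness of $C$ are needed; the monotonicity (F3) replaces the contraction estimate. Your paragraph on $0<u<1$ is also vaguer than what is required: the paper makes it precise by writing $(-\Delta)^{\alpha/2}u - q u = 0$ with $q(x)=-f(x,u(x))/u(x)$ (bounded by (F2), $\le 0$ by (F3)--(F4)) and applying the Schr\"odinger maximum principle of Lemma~\ref{wmp}; the bound $u<1$ comes from the same lemma applied to $v=1-u$, using $(-\Delta)^{\alpha/2}v = f(x,u)\ge 0$ (which again uses (F3) and (F4) to get $f\ge 0$ for $u\in[0,1]$). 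In short: the decomposition $u=v+h$ plus fixed point is the wrong tool here, and the structural role of (F3) as the enabler of the sub/supersolution machinery --- rather than as an afterthought to patch uniqueness --- is the idea you are missing.
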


\begin{remark}
\label{F5F6}
If $f\in C^{1}((D_0\setminus \overline{D}_1)\times [0,1])$ satisfies
\begin{enumerate}
	        \item[(F5)] $f\ge 0$ in $(D_0\setminus \overline{D}_1)\times [0,1]$,
		\item[(F6)] $\langle x,\nabla_x f(x,u)\rangle \ge 0$ for every $(x,u) \in (D_0\setminus \overline{D}_1)\times [0,1]$,
	\end{enumerate}
then it satisfies condition (F1).
Indeed,  let $t \ge 1$ and $(x,u) \in (D_0\setminus \overline{D}_1)\times [0,1]$ be such that $tx \in D_0$;
then 
$$
f(tx,u)-f(x,u)=\int_1^t \frac{d}{ds} f(sx,u) \, ds = \int_1^t\langle x,[\nabla_xf](sx,u)\rangle\ \, ds.
$$
Hence (F5) and (F6) imply (F1).
\end{remark}

As a consequence of Theorem \ref{main-thm1} we obtain the following result for harmonic functions with respect to fractional Laplacians.

\begin{cor}\label{cor1}
Let $D_0, D_1\subset \R^N$ be bounded (strictly) starshaped sets satisfying (D) and $f\equiv 0$. Then there exists a unique solution $u$ of \eqref{main-prob}. It satisfies $0 < u < 1$ on $D_0\setminus \overline{D}_1$ and all superlevel sets $U(\ell)$ of $u$ are (strictly) starshaped for $\ell\in (0,1)$.
\end{cor}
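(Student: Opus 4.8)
The plan is to treat existence and uniqueness of the solution directly, since for $f\equiv 0$ the problem \eqref{main-prob} is the classical fractional capacitary-potential problem and, unlike in Theorem~\ref{existence-thm}, the $C^{1,1}$ regularity of $D_0\setminus\overline{D}_1$ is not assumed here; once the solution is produced, the geometric statement is read off from Theorem~\ref{main-thm1}. To begin, note that $f\equiv 0$ trivially satisfies (F0), (F2) (with Lipschitz constant $0$), (F3), and also (F1), since $t^{\alpha}\cdot 0=0\ge 0$ (alternatively via Remark~\ref{F5F6}, as (F5) and (F6) hold trivially); thus, as soon as a solution $u$ with $0\le u\le 1$ (respectively $0<u<1$) is available, Theorem~\ref{main-thm1}(i) (respectively (ii), which requires only (F0), (F1), (F2)) yields that the superlevel sets $U(\ell)$, $\ell\in(0,1)$, are starshaped (respectively strictly starshaped).

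For existence I would use the probabilistic representation of $\alpha$-harmonic functions. Let $(X_t)$ be the isotropic $\alpha$-stable Lévy process associated with $(-\Delta)^{\alpha/2}$, set $\Omega:=D_0\setminus\overline{D}_1$, and let $\tau_\Omega$ be the first exit time of $X$ from $\Omega$. Define
\[
u(x):=\mathbb{P}_x\big(X_{\tau_\Omega}\in\overline{D}_1\big),\qquad x\in\rn ,
\]
so that $u\equiv 1$ on $\overline{D}_1$ and $u\equiv 0$ on $\rn\setminus D_0$, and $u$ is bounded, hence $u\in\calL$. By construction $u$ is $\alpha$-harmonic in $\Omega$, which (for bounded continuous functions in $\calL$) is equivalent to $-(-\Delta)^{\alpha/2}u=0=f(x,u)$ in $\Omega$ in the distributional sense of Section~1. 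The point needing care is the continuity of $u$ on $\rn$: the uniform exterior cone condition in (D) makes every boundary point of $\Omega$ regular for $\Omega$, so that from such points $X$ exits $\Omega$ immediately, and standard boundary estimates for the fractional Laplacian then give $u(x)\to 1$ as $x\to\partial D_1$ and $u(x)\to 0$ as $x\to\partial D_0$, with $u$ continuous across $\partial D_1$ and $\partial D_0$. Finally $0<u<1$ in $\Omega$: since $X$ is a pure-jump process, from any $x\in\Omega$ it jumps directly into $D_1$ with positive probability and into $\rn\setminus\overline{D}_0$ with positive probability, both sets having positive Lebesgue measure because $0\in D_1$ and $D_0$ is bounded.

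For uniqueness I would argue by the maximum principle: if $u_1,u_2$ are two solutions, then $w:=u_1-u_2$ is continuous and bounded on $\rn$, vanishes on $\rn\setminus\Omega=\overline{D}_1\cup(\rn\setminus D_0)$, and is $\alpha$-harmonic in the bounded set $\Omega$; hence $w(x)=\mathbb{E}_x[w(X_{\tau_\Omega})]=0$ for all $x\in\rn$, so the solution is unique. Combining this with the bounds $0<u<1$ and the verified hypotheses on $f$, an application of Theorem~\ref{main-thm1}(i) in the starshaped case and of Theorem~\ref{main-thm1}(ii) in the strictly starshaped case completes the argument. The main obstacle is the existence step, and more precisely the continuity of $u$ up to $\partial D_0$ and $\partial D_1$ inferred solely from the exterior cone condition, which rests on the regularity theory of boundary points for the Dirichlet problem of the fractional Laplacian; the remaining steps are direct appeals to the results stated above.
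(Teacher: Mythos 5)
Your proposal is correct and follows essentially the same route as the paper: the paper simply cites \cite[(1.49), (1.53)]{book2009} and \cite[page 57]{BB1999} for existence and uniqueness of the $\alpha$-harmonic solution (which is precisely the harmonic-measure / exit-distribution construction $u(x)=\mathbb{P}_x(X_{\tau_\Omega}\in\overline{D}_1)$ that you unpack, together with boundary regularity from the exterior cone condition and strict positivity from the full support of the harmonic measure) and then invokes Theorem~\ref{main-thm1}. You have merely spelled out the content of the cited results rather than taken a different path.
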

When $D_0\setminus \overline{D}_1$ is sufficiently smooth and $D_0$, $D_1$ are uniformly starshaped we can strengthen the assertion of Corollary \ref{cor1}. 
\begin{thm}\label{uniform-thm}
Let $D_0, D_1\subset \R^N$ be open bounded sets, such that $0\in D_1$ and $\overline{D_1}\subset D_0$. Moreover, assume $D_0$ and $D_1$ are uniformly starshaped, $D_0\setminus \overline{D}_1$ is a $C^{1,1}$ domain and $f\equiv 0$. Then all superlevel sets $U(\ell)$ of solutions $u$ of \eqref{main-prob} are uniformly starshaped for $\ell\in (0,1)$.
\end{thm}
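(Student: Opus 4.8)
The plan is to run the same moving-plane/dilation machinery used for Theorem \ref{main-thm1} (via Corollary \ref{cor1}), but now to track the quantitative starshapedness constant $\epsilon$ rather than just the qualitative inclusion $sA\subseteq A$. First I would recall the basic comparison that underlies all the starshapedness results here: for $s\in(0,1)$ set $u_s(x):=u(x/s)$, which is $\alpha$-harmonic (since $f\equiv 0$) in the dilated ring $sD_0\setminus \overline{sD_1}$, and since $sD_1\subset D_1$ and $sD_0\subset D_0$ (uniform starshapedness implies ordinary starshapedness) one has $u_s\ge u$ on the complement of $sD_0\setminus \overline{sD_1}$; by the maximum principle for $(-\Delta)^{\alpha/2}$ this gives $u_s\ge u$ everywhere, i.e. $u(x/s)\ge u(x)$, which is exactly monotonicity of $u$ along rays and hence starshapedness of every $U(\ell)$. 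The point of the uniform statement is to upgrade this to a strict, uniform gradient-type bound at the boundary of each $U(\ell)$.

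The key analytic input is the boundary behaviour of $\alpha$-harmonic functions on a $C^{1,1}$ domain. Since $D_0\setminus\overline{D_1}$ is $C^{1,1}$ and $u$ solves $(-\Delta)^{\alpha/2}u=0$ there with the given exterior data, regularity theory (this is precisely what Abatangelo's Theorem 1.5, already invoked for Theorem \ref{existence-thm}, and the standard boundary Harnack / barrier estimates provide) yields that $u$ is $C^{1}$ up to $\partial D_0$ from inside, with $u$ vanishing like $\dist(x,\partial D_0)^{?}$... more precisely one gets a genuine $C^1$ function with non-vanishing inward normal derivative on $\partial D_0$, and similarly $1-u$ is $C^1$ and strictly decreasing into the ring away from $\partial D_1$. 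Consequently $U(\ell)\setminus\inn D_1$ for $\ell\in(0,1)$ has a $C^1$ boundary $\{u=\ell\}$ on which $\nabla u\neq 0$, so the exterior unit normal $\nu_\ell(x)=-\nabla u(x)/|\nabla u(x)|$ exists at every point.

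Now I would convert ray-monotonicity into the inequality $\langle x,\nu_\ell(x)\rangle\ge\epsilon_\ell>0$. Differentiating $u(x/s)\ge u(x)$ in $s$ at $s=1$ gives $\langle x,\nabla u(x)\rangle\le 0$ at every interior point, hence $\langle x,\nu_\ell(x)\rangle\ge 0$ on $\{u=\ell\}$; to get strict positivity, note that $\langle x,\nabla u(x)\rangle$ is itself, up to lower-order terms, a solution of a linear $\alpha$-type equation (it is essentially $\frac{d}{ds}u(x/s)|_{s=1}$, a limit of nonnegative $\alpha$-harmonic-type increments $u(x/s)-u(x)$), so by the strong maximum principle it cannot vanish at an interior point unless identically zero, which is impossible since $u$ jumps from $1$ on $\overline{D_1}$ to $<1$ in the ring. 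Combined with the uniform positivity $\langle x,\nu(x)\rangle\ge\epsilon$ on $\partial D_0$ and $\partial D_1$ and a compactness argument on the level set $\{u=\ell\}$ (which is compact, being a closed subset of the bounded set $\overline{D_0}$, and on which $|\nabla u|$ is bounded below by continuity away from the two fixed boundaries), one extracts a uniform $\epsilon_\ell>0$ with $\langle x,\nu_\ell(x)\rangle\ge\epsilon_\ell$, i.e. $U(\ell)$ is uniformly starshaped.

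The main obstacle I anticipate is the boundary regularity step: asserting that $u\in C^1$ up to $\partial(D_0\setminus\overline{D_1})$ with non-degenerate normal derivative is delicate for the fractional Laplacian, since generic $\alpha$-harmonic functions blow up like $\dist^{\alpha/2-1}$ near the boundary and only the "right" Dirichlet data produces $C^1$ (or at least $\dist^{\alpha/2}$-Hölder with a clean expansion) behaviour; one must use that the exterior data here is exactly $\mathbf 1_{\overline{D_1}}$ and invoke the precise $C^{1,1}$-domain estimates of Abatangelo (and/or Ros-Oton–Serra-type boundary regularity). A secondary, more routine difficulty is making the differentiation $\frac{d}{ds}u(x/s)$ rigorous and identifying $\langle x,\nabla u\rangle$ as an admissible sub/supersolution so that the strong maximum principle applies; this should follow from interior regularity of $u$ in the ring plus the comparison principle already used above.
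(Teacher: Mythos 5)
Your high-level plan coincides with the paper's: set $w(x)=\langle x,\nabla u(x)\rangle$, show $w\le 0$ by the dilation comparison, upgrade to strict negativity by a maximum principle for $w$ in the interior, and combine with boundary behaviour to extract a uniform lower bound on $\langle x,\nu_\ell(x)\rangle$. The difficulty is entirely in the two steps you flag as ``obstacles,'' and neither is resolved by your proposal.

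The central gap is the boundary regularity claim. You assert that $u$ is $C^1$ up to $\partial(D_0\setminus\overline{D_1})$ with non-vanishing normal derivative, and in your last paragraph you suggest that the specific data $\mathbf 1_{\overline{D_1}}$ together with Abatangelo/Ros--Oton--Serra estimates should yield this. That is false: for $\alpha$-harmonic $u$ vanishing outside a $C^{1,1}$ domain, the sharp regularity is $u/\delta^{\alpha/2}\in C^{\beta}$, so $u\sim\delta^{\alpha/2}$ and $\nabla u$ blows up like $\delta^{\alpha/2-1}$ near $\partial D_0$; the data being exactly $\mathbf 1_{\overline{D_1}}$ does not change this. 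Consequently ``$|\nabla u|$ is bounded'' and ``$u$ is $C^1$ up to the boundary'' both fail, and your compactness argument that $|\nabla u|$ is bounded below on $\{u=\ell\}$ has no starting point near $\partial D$. The paper works with the blow-up rather than against it: it invokes the gradient estimates of Bogdan--Kulczycki--Nowak \cite{BKN2002} (and the argument from \cite{K2014}) to get $\langle\nu(x),\nabla u(x)\rangle\le -c\,\delta_D(x)^{\alpha/2-1}$ while the tangential component is of strictly smaller order, so that in a boundary strip the \emph{ratio} $w(x)/|\nabla u(x)|$ is bounded away from $0$ by roughly $-\langle x,\nu(x)\rangle\le -\varepsilon/2$, even though $w$ and $|\nabla u|$ individually blow up. This ratio estimate is what the theorem actually needs, and it cannot be reached by a ``$u\in C^1$ with nonzero normal derivative'' route.

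The second step, that $w$ solves a fractional equation so that the strong maximum principle applies, is also not ``routine'' in the way you describe: the paper proves a dedicated lemma (Lemma \ref{computation}) establishing $(-\Delta)^{\alpha/2}\langle x,\nabla u\rangle=\alpha(-\Delta)^{\alpha/2}u+\langle x,\nabla(-\Delta)^{\alpha/2}u\rangle$, which for $f\equiv 0$ gives $(-\Delta)^{\alpha/2}w=0$ in the interior; the justification requires interchanging $\partial_t$ with the singular integral, with tail terms controlled by $u\in W^{1,1}_{\mathrm{loc}}\cap\calL$. Your ``limit of nonnegative increments'' heuristic does not substitute for this. Finally, a cosmetic remark: your opening comparison has the inequality reversed --- with $u_s(x)=u(x/s)$, $s\in(0,1)$, the exterior data gives $u_s\le u$ on $(sD_0)^c$, and the monotonicity from Lemma \ref{lmstar} is $u(tx)\le u(x)$ for $t\ge 1$; the two sign slips cancel so you still land on $\langle x,\nabla u\rangle\le 0$, but as written the intermediate comparison is wrong.
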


As a consequence of Theorem \ref{main-thm1} we obtain more general result for harmonic functions with respect to Schr{\"o}dinger operators based on fractional Laplacians.

\begin{cor}\label{cor2}
Let $D_0, D_1\subset \R^N$ be bounded (strictly) starshaped sets satisfying (D) and $f(x,u) = q(x) u$, $q$ is a bounded nonnegative Borel function on $(D_0\setminus \overline{D}_1)$ such that 
$$
q(tx) \ge q(x) \quad \text{for any $t > 1$ and $x \in (t^{-1} D_0) \setminus \overline{D_1}$.}
$$
Then there exists a unique solution $u$ of \eqref{main-prob}. It satisfies $0 < u < 1$ on $D_0\setminus \overline{D}_1$ and all superlevel sets $U(\ell)$ of $u$ are (strictly) starshaped for $\ell\in (0,1)$.
\end{cor}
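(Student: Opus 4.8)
The plan is to reduce the statement to Theorem \ref{main-thm1} by (a) checking that $f(x,u)=q(x)u$ fits the hypotheses (F0)--(F3), and (b) constructing probabilistically a solution $u$ with $0<u<1$ and showing it is the only one.

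\emph{Step 1: the structural conditions.} For $f(x,u)=q(x)u$ with $q\ge 0$ bounded Borel, (F0) holds since $|f(x,u)|=q(x)|u|\le\|q\|_{L^\infty}$; (F2) holds with $C=\|q\|_{L^\infty}$ because $|f(x,u_1)-f(x,u_2)|=q(x)|u_1-u_2|$; and (F3) holds because $q(x)\ge 0$. For (F1) take $t\ge 1$ and $x,tx\in D_0\setminus\overline{D}_1$. Then $x\notin\overline{D}_1$ and $x\in t^{-1}D_0$, so the hypothesis on $q$ gives $q(tx)\ge q(x)$ (trivially if $t=1$); since $t^\alpha\ge 1$ and $q(x)u\ge 0$, we get $t^\alpha f(tx,u)=t^\alpha q(tx)u\ge q(tx)u\ge q(x)u=f(x,u)$. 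Hence (F0), (F1), (F2), (F3) all hold.

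\emph{Step 2: existence.} Let $(X_t)$ be the isotropic $\alpha$-stable Lévy process in $\R^N$ (with generator $-(-\Delta)^{\alpha/2}$), set $D:=D_0\setminus\overline{D}_1$, and let $\tau:=\inf\{t>0:\,X_t\notin D\}$. By (D) and boundedness of $D_0$, $D$ is bounded, so $E^x\tau<\infty$ for every $x$. Define
$$u(x):=E^x\!\left[\exp\!\Big(-\!\int_0^\tau q(X_s)\,ds\Big)\mathbf{1}_{\{X_\tau\in\overline{D}_1\}}\right]\qquad(x\in\R^N),$$
with $\tau=0$ when $x\notin D$, so $u\equiv 1$ on $\overline{D}_1$ and $u\equiv 0$ on $\R^N\setminus D_0$; since $q\ge 0$ we have $0\le u\le 1$, hence $u\in\calL$. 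A Dynkin/Feynman--Kac argument (apply the optional-stopping/Itô formula to $t\mapsto\exp(-\!\int_0^{t\wedge\tau}q(X_s)\,ds)\,u(X_{t\wedge\tau})$) shows $-(-\Delta)^{\alpha/2}u=qu$ as a distribution on $D$, and, $q$ being bounded, a.e.\ on $D$. Continuity of $u$ in the interior of $D$ is standard; at a point $z\in\partial D_0\cup\partial D_1$ one uses that, by the uniform exterior cone condition in (D), $z$ is a regular boundary point of $D$ for $(X_t)$ (a truncated cone is non-thin at its vertex for the $\alpha$-stable process), and that near $\partial D_1$ the exterior cone of $D$ lies in $\overline{D}_1$ while near $\partial D_0$ it lies in $\R^N\setminus\overline{D}_0$; this forces $u(x)\to 1$ as $x\to z\in\partial D_1$ and $u(x)\to 0$ as $x\to z\in\partial D_0$. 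Thus $u$ is a solution of \eqref{main-prob}.

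\emph{Step 3: uniqueness, strictness, and conclusion.} If $u_1,u_2$ are solutions, then $w:=u_1-u_2$ is bounded, continuous, vanishes on $\R^N\setminus D$, and solves $-(-\Delta)^{\alpha/2}w=qw$ on $D$; the same representation with zero exterior datum gives $w(x)=E^x[e^{-\int_0^\tau q(X_s)\,ds}w(X_\tau)]=0$, so $u$ is unique. For $0<u<1$: since $\R^N\setminus\overline{D}_0$ is a nonempty open set of infinite measure contained in $\R^N\setminus\overline{D}_1$, from any $x\in D$ the process jumps out of $D_0$ directly with positive probability, so $P^x(X_\tau\notin\overline{D}_1)>0$ and $u(x)\le P^x(X_\tau\in\overline{D}_1)<1$; conversely $\tau<\infty$ a.s.\ gives $e^{-\int_0^\tau q(X_s)\,ds}>0$ a.s., while the process jumps into the open set $D_1$ before leaving $D$ with positive probability, so $P^x(X_\tau\in\overline{D}_1)>0$ and $u(x)>0$. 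Finally, Theorem \ref{main-thm1}(i) — using (F0), (F1), (F2), (F3) and $0\le u\le 1$ — gives that $U(\ell)$ is starshaped for $\ell\in(0,1)$; and if $D_0,D_1$ are strictly starshaped, Theorem \ref{main-thm1}(ii) — which needs only (F0), (F1), (F2) together with $0<u<1$ — gives that each $U(\ell)$ is strictly starshaped. The main obstacle is entirely in Step 2: the potential-theoretic input, namely the distributional identity $-(-\Delta)^{\alpha/2}u=qu$ and, above all, the boundary continuity of $u$, which is exactly where the uniform exterior cone condition in (D) is used (regularity of cone vertices for the $\alpha$-stable process). A variational alternative exists — since $q\ge 0$ the bilinear form of $(-\Delta)^{\alpha/2}+q$ is coercive on $H^{\alpha/2}_0(D)$, yielding a unique weak solution that one bootstraps to a continuous one, with $0<u<1$ from the strong maximum principle — but the probabilistic route is preferable because it produces the strict bounds with no extra work.
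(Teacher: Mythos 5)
Your proof is correct and takes essentially the same approach as the paper, which disposes of the corollary in one line by invoking Theorem \ref{main-thm1} after noting that (F0)--(F3) are ``clearly satisfied'' and that existence, uniqueness and $0<u<1$ follow ``by the arguments from \cite{BB1999}.'' What you have done is spell out precisely what those arguments are: the Feynman--Kac representation $u(x)=E^x[e^{-\int_0^\tau q(X_s)\,ds}\,1_{\{X_\tau\in\overline{D}_1\}}]$, which is exactly the construction used in the paper's own Lemma \ref{wmp} (applied with potential $-q\le0$, $g=0$, $h=1_{\overline{D}_1}$), the regularity of cone vertices for the $\alpha$-stable process to get boundary continuity, and the two jump estimates giving the strict bounds $0<u<1$. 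The verification of (F0)--(F3) for $f(x,u)=q(x)u$ is also carried out correctly, in particular the careful check that $x,tx\in D_0\setminus\overline{D}_1$ with $t>1$ places $x\in(t^{-1}D_0)\setminus\overline{D}_1$, so the monotonicity hypothesis on $q$ applies. In short, same route, fuller exposition.
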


As another consequences of Theorem \ref{main-thm1} and Theorem \ref{existence-thm}  we obtain the following result for Allen-Cahn-type nonlinearities.
\begin{cor}\label{cor3}
Let $D_0, D_1\subset \R^N$ be bounded strictly starshaped sets satisfying (D) and $f(x,u) = \beta u - \gamma u^p$, where $\beta \ge 0$, $\gamma\in \R$ and $p \ge 1$. We have
\begin{enumerate}
\item[(i)] Assume $\beta \ge \gamma$. If $u$ is a solution of \eqref{main-prob} such that $0 < u < 1$ on $D_0\setminus \overline{D}_1$, then the superlevel sets $U(\ell)$ of $u$ are strictly starshaped for $\ell\in (0,1)$.
\item[(ii)] Assume $\beta \ge p \gamma$ and $D_0\setminus \overline{D}_1$ is a $C^{1,1}$ domain. Then there exists a unique solution $u$ of \eqref{main-prob}. It satisfies $0 < u < 1$ on $D_0\setminus \overline{D}_1$ and all superlevel sets $U(\ell)$ of $u$ are strictly starshaped for $\ell\in (0,1)$.
\end{enumerate}
\end{cor}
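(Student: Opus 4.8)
The plan is to obtain both parts as direct consequences of the results already proved, once we check which of the structural conditions (F0)--(F4) the nonlinearity $f(x,u)=\beta u-\gamma u^p$ satisfies. Since this $f$ does not depend on $x$, is polynomial in $u$, and is considered only on the compact interval $[0,1]$, conditions (F0) and (F4) (boundedness, continuity, and $f(x,0)=0$) are immediate, and (F2) holds because $f$ is $C^1$, hence Lipschitz, in $u\in[0,1]$. The hypotheses on $D_0,D_1$ --- bounded, strictly starshaped, satisfying (D), and in part (ii) also $D_0\setminus\overline{D}_1$ of class $C^{1,1}$ --- are exactly those required by Theorem \ref{main-thm1}(ii) and Theorem \ref{existence-thm}. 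So the only real content is the verification of (F1) and, for part (ii), of (F3).

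For (F1): since $f$ is independent of $x$ we have $f(tx,u)=f(u):=\beta u-\gamma u^p$, and the requirement $t^{\alpha}f(tx,u)\ge f(x,u)$ for all $t\ge1$ is then equivalent to $f(u)\ge0$ for every $u\in[0,1]$. Writing $f(u)=u\bigl(\beta-\gamma u^{p-1}\bigr)$ and using $0\le u^{p-1}\le1$ on $[0,1]$ (valid since $p\ge1$), we get the chain $\gamma u^{p-1}\le\max\{\gamma,0\}\le\beta$, where the last inequality uses $\beta\ge0$ together with $\beta\ge\gamma$. Hence $f(u)\ge0$ and (F1) holds. With (F0), (F1), (F2) in hand, Theorem \ref{main-thm1}(ii) applies and gives exactly the assertion of part (i).

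For part (ii) we work under the stronger hypothesis $\beta\ge p\gamma$. First note that $\beta\ge p\gamma$ still implies $\beta\ge\gamma$ (if $\gamma\ge0$ then $p\gamma\ge\gamma$; if $\gamma<0$ then $\beta\ge0>\gamma$), so (F1) holds as before. To check (F3) we compute $f'(u)=\beta-p\gamma u^{p-1}$ and again use $0\le u^{p-1}\le1$ to obtain $p\gamma u^{p-1}\le\max\{p\gamma,0\}\le\beta$, so $f'\ge0$ on $[0,1]$, i.e.\ $f$ is nondecreasing in $u$. Thus $f$ satisfies (F1), (F2), (F3), (F4), and since $D_0,D_1$ are bounded strictly starshaped and $D_0\setminus\overline{D}_1$ is a $C^{1,1}$ domain, Theorem \ref{existence-thm} yields a unique solution $u$ with $0<u<1$ whose superlevel sets $U(\ell)$ are strictly starshaped for $\ell\in(0,1)$, which is part (ii).

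There is no serious obstacle in this argument; the only points deserving a moment's care are the elementary case distinction on the sign of $\gamma$ in the two estimates $\gamma u^{p-1}\le\beta$ and $p\gamma u^{p-1}\le\beta$ on $[0,1]$, and the observation --- which is what makes (F1) so cheap here --- that for an $x$-independent nonlinearity condition (F1) is nothing but nonnegativity of $f$ on $[0,1]$.
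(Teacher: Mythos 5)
Your proposal is correct and follows the same approach as the paper: verify the relevant structural conditions on $f$ and invoke Theorem \ref{main-thm1}(ii) for part (i) and Theorem \ref{existence-thm} for part (ii). The paper compresses the verification of (F1) to the word ``clearly,'' whereas you usefully spell it out, noting that for an $x$-independent nonlinearity (F1) reduces to $f\ge0$ on $[0,1]$ (one could equally invoke Remark \ref{F5F6}, since (F6) is vacuous here), and you make explicit the sign case distinction on $\gamma$ and the observation that $\beta\ge p\gamma$ together with $\beta\ge0$ and $p\ge1$ implies $\beta\ge\gamma$; these are details the paper omits but does implicitly rely on.
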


%The following analogous result to Theorem \ref{main-thm1} (i) holds in a special case when $D_0 = \rn$.

%\begin{thm}\label{main-thm2}
%Let $D_0 = \rn$, $D_1\subset \R^N$ be a bounded starshaped set such that $D_0,D_1$ satisfy (D0). Moreover, let $f: \left(\overline{D_1}\right)^c \times [0,1] \to \R$ satisfy (F0), (F1), (F2) and (F3). If $u$ is a solution of \eqref{main-prob} such that $0 \le u \le 1$ on $D_1^c$ and $\lim\limits_{|x|\to\infty}u(x)=0$, then the superlevel sets $U(\ell)$ of $u$ are starshaped for $\ell\in (0,1)$.
%\end{thm}

We note that Theorem \ref{main-thm1} is in fact a special case of the following more general result in which we do not assume $u$ to be constant on $\overline{D_1}$ and on $\R^N \setminus D_0$. To be precise, let $b_0,b_1$ be continuous and bounded functions on $\rn$ and consider the following problem 
\begin{equation}\label{main-prob2}
\begin{aligned}
-(-\Delta)^{\alpha/2}u&=f(x,u) &&\text{ in $D_0\setminus\overline{D}_1$}\\
u&=b_0&& \text{in $\R^N\setminus D_0$,}\\
u&=b_1&& \text{in $\overline{D}_1$,}
\end{aligned}
\end{equation}
We say that a function $u$ is a {\it{solution}} of (\ref{main-prob2}), if $u$ is continuous and bounded on $\rn$, $u = b_0$ in $\rn \setminus D_0$, $u = b_1$ in $\overline{D}_1$, and $-(-\Delta)^{\alpha/2}u = f(x,u)$ as distributions in $D_0 \setminus \overline{D}_1$.

\begin{thm}\label{main-thm3}
	Let $D_0,D_1\subset \R^N$ satisfy (D), $f: (D_0\setminus \overline{D}_1)\times [0,1] \to \R$ and $b_0, b_1$ be continuous and bounded functions on $\rn$such that $b_1\equiv 1$ on $\partial D_1$ and $b_0\equiv 0$ on $\partial D_0$ and $b_0$ and $b_1$ have starshaped superlevel sets. Then the statements (i), (ii), and (iii) of Theorem \ref{main-thm1} hold for solutions $u$ of \eqref{main-prob2}.	
	%
	%We have:
	%
	%(i) Assume $D_0$ and $D_1$ are bounded and starshaped sets and $f$ satisfies (F0), (F1), (F2), (F3). If $u$ is a solution of \eqref{main-prob2} such that $0 \le u \le 1$ on $D_0\setminus \overline{D}_1$, then the superlevel sets $U(\ell)$ of $u$ are starshaped for $\ell\in (0,1)$.
	%
	%(ii) Assume $D_0$ and $D_1$ are bounded and strictly starshaped sets and $f$ satisfies (F0), (F1), (F2). If $u$ is a solution of \eqref{main-prob2} such that $0 < u < 1$ on $D_0\setminus \overline{D}_1$, then the superlevel sets $U(\ell)$ of $u$ are strictly starshaped for $\ell\in (0,1)$.
	%
	%(iii) Assume $D_0=\rn$ and $D_1$ is a bounded starshaped set and $f$ satisfies (F0), (F1), (F2), (F3). If $u$ is a solution of \eqref{main-prob2} such that $0 \le u \le 1$ on $D_0\setminus \overline{D}_1$, then the superlevel sets $U(\ell)$ of $u$ are starshaped for $\ell\in (0,1)$.
\end{thm}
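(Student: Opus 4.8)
The plan is to reduce Theorem \ref{main-thm3} to the framework already used for Theorem \ref{main-thm1} by working with a suitable translated/scaled version of $u$ and comparing it with $u$ itself. Fix $s\in(0,1)$ and consider the rescaled function $u_s(x):=u(sx)$. Because the fractional Laplacian is homogeneous of order $\alpha$ under dilations, one has $-(-\Delta)^{\alpha/2}u_s(x)=s^{\alpha}\bigl(-(-\Delta)^{\alpha/2}u\bigr)(sx)=s^{\alpha}f(sx,u(sx))$ in $s^{-1}(D_0\setminus\overline{D}_1)$, and since $s\le 1$, condition (F1) (applied with $t=1/s\ge1$) yields $s^{\alpha}f(sx,u(sx))\le f(x,u(sx))$ wherever both points lie in $D_0\setminus\overline{D}_1$; this is precisely the ordering that makes a comparison/moving-plane-type argument run. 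The key point that must be extracted from the hypotheses is that the boundary data are monotone under this dilation: since $b_0$ and $b_1$ have starshaped superlevel sets and $b_0\equiv0$ on $\partial D_0$, $b_1\equiv1$ on $\partial D_1$, the function $u_s$ restricted to $\R^N\setminus s^{-1}D_0$ and to $s^{-1}\overline{D}_1$ dominates (resp.\ is dominated by) the corresponding values of $u$ on the overlap regions, exactly as in the constant-data case $b_0\equiv0$, $b_1\equiv1$.

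The central device is the comparison principle for the fractional Laplacian (maximum principle on the open set $(D_0\setminus\overline{D}_1)\cap s^{-1}(D_0\setminus\overline{D}_1)$, using the uniform exterior cone condition from (D) for regularity at the boundary and (F2) to absorb the zeroth-order term via Gronwall/Kato-type inequalities): I would show $u_s\ge u$ on this intersection by checking that $w:=u-u_s$ satisfies a differential inequality $-(-\Delta)^{\alpha/2}w\le c(x)w$ with $c\in L^\infty$, together with $w\le0$ outside the intersection domain (this is where the starshapedness of the superlevel sets of $b_0$, $b_1$ is used, together with starshapedness of $D_0$, $D_1$ to control the geometry of the sets $s^{-1}D_i$ relative to $D_i$). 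Granting $u(sx)\ge u(x)$ for all $x$ and all $s\in(0,1)$, the starshapedness of every superlevel set $U(\ell)$ is immediate: if $u(x)\ge\ell$ then $u(sx)\ge u(x)\ge\ell$, so $sx\in U(\ell)$. For part (ii), strict starshapedness follows by upgrading the comparison to a strong maximum principle (using $0<u<1$ and strict starshapedness of $D_0$, $D_1$), and for part (iii) one argues identically with $D_0=\R^N$, where the region $s^{-1}D_1\subset D_1$ again gives the correct sign of the boundary defect and no outer boundary term appears.

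The main obstacle I expect is the careful verification that $w=u-u_s\le0$ holds on the whole complement of the overlap domain $\Omega_s:=(D_0\setminus\overline{D}_1)\cap s^{-1}(D_0\setminus\overline{D}_1)$ — i.e.\ on $s^{-1}\overline{D}_1\cup(\R^N\setminus s^{-1}D_0)\cup\overline{D}_1\cup(\R^N\setminus D_0)$, intersected appropriately — because near $\partial(s^{-1}D_1)$ and $\partial(s^{-1}D_0)$ the two functions are compared on sets that only overlap partially, and one must use both that $D_i$ is starshaped (so $s^{-1}D_1\supseteq D_1$ and $s^{-1}D_0\supseteq D_0$, hence $\overline{D}_1\subseteq s^{-1}D_1$) and that the superlevel sets of $b_1$ are starshaped (so $b_1(sx)\ge b_1(x)$ whenever both are defined, by the same superlevel-set argument as above). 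Once this sign information is in place, the comparison principle plus the elementary superlevel-set implication finish the proof; I would also note that Theorem \ref{main-thm1} is recovered by taking $b_0\equiv0$, $b_1\equiv1$, whose (trivially starshaped, in fact whole-space or empty) superlevel sets satisfy the hypotheses.
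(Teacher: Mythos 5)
Your overall strategy is the same as the paper's: compare $u$ with its dilation, invoke the $\alpha$-homogeneity of $(-\Delta)^{\alpha/2}$ and condition (F1) to get a one-signed differential inequality for the difference, use starshapedness of $D_0, D_1$ and of the superlevel sets of $b_0, b_1$ to control the exterior data, and close with a maximum principle for a fractional Schr\"odinger operator. (The paper works with $u_t(x)=u(x)-u(tx)$, $t>1$, rather than $u(sx)-u(x)$, $s<1$; this is just a relabelling of the same object.)

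However, there is a genuine gap in how you propose to ``absorb the zeroth-order term.'' The maximum principle the paper relies on (Lemma \ref{wmp}) requires the potential $q$ in $(-\Delta)^{\alpha/2}w-qw\ge 0$ to be \emph{nonpositive}; boundedness alone (which is all (F2) gives) does not suffice, and Gronwall/Kato-type arguments do not supply the missing sign. In parts (i) and (iii) the correct sign of $q_t$ comes precisely from (F3) (monotonicity of $f$ in $u$), which your proposal never invokes. More seriously, in part (ii) hypothesis (F3) is \emph{not} assumed, so the naive choice $q_t=(f(x,u(tx))-f(x,u(x)))/u_t$ can have either sign and the argument breaks. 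The paper's fix is a nontrivial continuity argument: one sets $t^{*}=\sup\{t>1:\exists\,x\in D_t,\ u_t(x)\le 0\}$, shows $t^{*}<\infty$ and that $u_{t^{*}}\ge 0$ on $D_{t^{*}}$ with equality somewhere, then writes $f(x,u(t^{*}x))-f(x,u(x))=F_+-F_-$ and only moves $-F_-/u_{t^{*}}$ to the left as a potential (which is $\le 0$ because $u_{t^{*}}\ge 0$ and $F_-\ge 0$), keeping $F_+$ as a nonnegative source; the strict positivity conclusion of Lemma \ref{wmp} then contradicts $u_{t^{*}}(x_0)=0$. Your phrase ``upgrading the comparison to a strong maximum principle'' does not capture this mechanism and, as written, would not close part (ii). You should also note explicitly that part (iii) requires the unbounded-domain version of the maximum principle (Corollary \ref{wmp2}), since $D_0=\rn$ means the comparison domain is not bounded.
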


By similar methods we obtain the following result for Green functions corresponding to fractional Laplacians on  convex bounded domains. For basic properties of the Green functions see Preliminaries. 
\begin{thm}\label{Green-thm}
Let $D \subset \rn$ be an open bounded convex set and $G_D(x,y)$ be the Green function for $D$ corresponding to $(-\Delta)^{\alpha/2}$, $\alpha \in (0,2)$. Then for any fixed $y \in D$ the superlevel sets $U(\ell)$ of the function $u(x) = G_D(x,y)$ are starshaped with respect to $y$ for any $\ell \in (0,\infty)$.
\end{thm}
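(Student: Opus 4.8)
The plan is to run the same rescaling-and-comparison argument used for Theorem~\ref{main-thm1}, with the role of the inner obstacle $\overline{D_1}$ (on which the solution equals $1$) now played by the point $y$, at which $G_D(\cdot,y)$ blows up (or, when $\alpha\ge N$, attains its maximum). After a translation I may assume $y=0\in D$ and set $u(x):=G_D(x,0)$. First I would record the facts about $u$ collected in the Preliminaries: $u\in\calL$, $u$ is continuous on $\rn\setminus\{0\}$ and vanishes continuously on $\partial D$ (a bounded convex set is regular for the process), $u>0$ in $D$, $u\equiv0$ on $\rn\setminus D$, and $-(-\Delta)^{\alpha/2}u=0$ in $D\setminus\{0\}$; moreover near the pole $u(x)=\mathcal A_{N,\alpha}|x|^{\alpha-N}(1+o(1))$ when $\alpha<N$, with $\mathcal A_{N,\alpha}>0$ \emph{independent of $D$}, while $u$ has a logarithmic singularity if $\alpha=N$ and is bounded if $\alpha>N$. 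Since each nonempty superlevel set $\{u\ge\ell\}$, $\ell>0$, contains $0$, by the characterization \eqref{stardef} it is enough to prove
\[
u(tx)\ge u(x)\qquad\text{for all }x\in\rn\text{ and all }t\in[0,1].
\]

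The core step is to fix $t\in(0,1)$, put $u_t(x):=u(tx)$ and $w:=u_t-u\in\calL$, and argue that $w\ge0$ by a maximum principle. Convexity of $D$ together with $0\in D$ gives $tD\subseteq D$, hence $D\subseteq t^{-1}D$; combined with the scaling identity $(-\Delta)^{\alpha/2}u_t(x)=t^{\alpha}[(-\Delta)^{\alpha/2}u](tx)$ this shows that $u_t$ is $\alpha$-harmonic in $t^{-1}D\setminus\{0\}\supseteq D\setminus\{0\}$, so $w$ is $\alpha$-harmonic in $D\setminus\{0\}$ and, by interior regularity, smooth there. On $\rn\setminus D$ one has $u=0$ and $u_t=u(t\,\cdot)\ge0$, so $w\ge0$ there. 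At the pole, writing $w(x)=u(x)\big(u_t(x)/u(x)-1\big)$, the ratio tends to $t^{\alpha-N}>1$ when $\alpha<N$ (so $w\to+\infty$), to $1$ with $w\to\mathcal A_{N,N}\log(1/t)>0$ when $\alpha=N$, and $w$ extends continuously with $w(0)=0$ when $\alpha>N$; in every case $\liminf_{x\to0}w\ge0$. Then I would argue by contradiction: if $m:=\inf_{D\setminus\{0\}}w<0$, then — using continuity of $w$ on $\overline D\setminus\{0\}$, $w\ge0$ on $\partial D$, and $\liminf_{x\to0}w\ge0$ — the value $m$ is attained at an interior point $x_0\in D\setminus\{0\}$, and $w\ge m$ on all of $\rn$; evaluating the equation pointwise,
\[
0=-(-\Delta)^{\alpha/2}w(x_0)=c_{N,\alpha}\,\mathrm{p.v.}\!\int_{\rn}\frac{w(z)-w(x_0)}{|x_0-z|^{N+\alpha}}\,dz\ \ge\ 0,
\]
so $w\equiv m$ a.e., contradicting $w\ge0>m$ on the positive-measure set $\rn\setminus D$. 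Hence $w\ge0$; letting $t$ vary over $(0,1)$ (the values $t\in\{0,1\}$ being trivial) yields the displayed inequality, and the theorem follows.

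The one genuinely delicate point is the behaviour of $w$ near the pole $y$: to see that the difference of the two singularities $u_t$ and $u$ has the correct sign one must use that the leading singular term of $G_D$ coincides with that of the Riesz kernel of $(-\Delta)^{\alpha/2}$ with a \emph{domain-independent} constant $\mathcal A_{N,\alpha}$ (and handle the borderline cases $\alpha=N$ and $\alpha>N$ separately, as above). Everything else — the scaling identity, the reduction to $u(tx)\ge u(x)$, and the interior-minimum argument — is exactly the comparison mechanism already used in the proof of Theorem~\ref{main-thm1}, now with no nonlinearity present, so no analogue of condition~(F1) is required.
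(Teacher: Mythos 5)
Your proof is correct, and it is a genuine variant of the paper's argument rather than a reproduction of it. The shared backbone is the same: translate so that $y=0$, invoke Lemma~\ref{lmstar} to reduce to a sign condition on $w=u(t\cdot)-u$ (equivalently $u_t=u-u(t\cdot)$ in the paper's notation), use convexity of $D$ and the scaling identity to see that $w$ is $\alpha$-harmonic in $D\setminus\{0\}$, and control the sign of $w$ near the pole by the fact that the leading singularity of $G_D$ is the free-space Riesz kernel with a domain-independent constant. Where you depart from the paper is in the comparison step. The paper excises a small ball $\overline{B_\varepsilon(0)}$ on which it has already shown $u_t>0$ from the explicit $K_\alpha$-plus-bounded-remainder decomposition, and then invokes the probabilistic/potential-theoretic maximum principle Lemma~\ref{wmp} on the resulting annular region $D_t\setminus\overline{B_\varepsilon(0)}$; it also treats the one-dimensional cases $1=N\le\alpha$ entirely separately, via the explicit Blumenthal--Getoor--Ray formula \eqref{Greenf} and monotonicity computations. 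You instead run a self-contained interior-minimum argument: having established $\liminf_{x\to0}w\ge0$, $w\ge0$ on $\rn\setminus D$, and continuity of $w$ on $\overline D\setminus\{0\}$, a negative infimum would have to be attained at an interior $x_0\in D\setminus\{0\}$, where the pointwise integro-differential representation of $(-\Delta)^{\alpha/2}w(x_0)=0$ forces $w\equiv m$ a.e., a contradiction. That argument also subsumes the one-dimensional cases $\alpha=N$ and $\alpha>N$ with only a case-check at the pole, so your proof is more uniform across the parameter range. What the paper's route buys is that Lemma~\ref{wmp} is already in place and comes with existence/uniqueness; what yours buys is a shorter, more elementary positivity proof that avoids the gauge-theoretic machinery.

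Two points you should make explicit to close the argument fully. First, the step where you evaluate $(-\Delta)^{\alpha/2}w(x_0)$ pointwise requires that $w$ be, say, $C^2$ near $x_0$ and in $\calL$; this follows from interior regularity of $\alpha$-harmonic functions (e.g.\ the Poisson kernel representation in \cite{book2009}), and from $u,u(t\cdot)\in L^1(\rn)$ (compactly supported with an integrable singularity at the pole). Second, the compactness argument that produces $x_0$ uses continuity of $u$ (hence of $w$) up to $\partial D$; for a bounded convex set every boundary point is regular for the $\alpha$-stable process, so this is indeed available, but it deserves a word since it is the only place where the geometry of $\partial D$ enters.
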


Let us recall that in the limit case $\alpha=2$, i.e. in the case of the usual Laplacian, these are all well-known results, see for instance \cite{Acker, DK, F, FG, GR, Kawohl, Kaw1, Long, Salani}. Notice that, although the geometric ideas underlying the situation here at hand are similar to the ones of the papers just quoted, we need big efforts to deal with the distinctive peculiarities of the fractional Laplacian. We use completely different methods than in the classical case. Namely, in the proof of our main result Theorem \ref{main-thm3} we study the function $u_t(x) = u(x) - u(tx)$ using the appropriate maximum principle for Schr{\"o}dinger fractional operators. In the proof of Theorem \ref{main-thm3} (ii), in order to relax assumption (F3), we use additionally the method of continuity.
\medskip

The paper is organized as follows. In the very next section we introduce some notation and collect some preliminary results. In \S3 we prove theorems \ref{main-thm1}, \ref{existence-thm}, \ref{main-thm3}, \ref{Green-thm} and their corollaries. Finally, in \S4 we treat uniform starshapedness and prove Theorem \ref{uniform-thm}.
\medskip

{\bf Acknowledgements.} The third author has been partially supported by GNAMPA of INdAM and by the FIR 2013 project `Geometrical and Qualitative aspects of PDE''.

\section{Preliminaries}

Let us fix some notation. In the following $N\in \N$ and $\alpha\in(0,2)$. For $U\subset \R^N$, a nonempty measurable set, we denote by $1_U: \R^N \to \R$ the characteristic function, $|U|$ the Lebesgue measure, and $U^c = \rn \setminus U$ the complement of $U$. The notation $D \subset \subset U$ means that $\overline D$ is compact and contained in the interior of $U$. The distance between $D$ and $U$ is given by $\dist(D,U):= \inf\{|x-y|\::\: x \in D,\, y \in U\}$ and if $D= \{x\}$ we simply write $\dist(x,U)$. Note that this notation does {\em not} stand for the usual Hausdorff distance. We write $\delta_{U}(x)=\dist(x,\R^N\setminus U)$ for the distance function. For $x \in \R^N$, $r>0$, $B_r(x)$ is the open ball centered at $x$ with radius $r$. We also denote $\langle f,g\rangle = \int_{\rn} f g$.

Let $D \subset \rn$ be an open bounded set. By $G_D(x,y)$ we denote the Green function of $D$ with respect to $(-\Delta)^{\alpha/2}$. For any $x \in D$ by $\omega_D^x(dy)$ we denote the harmonic measure  of $D$ with respect to $(-\Delta)^{\alpha/2}$. The definition and basic properties of $G_D(x,y)$ and $\omega_D^x(dy)$ may be found e.g. in \cite[pages 14-15]{book2009}. It is well-known (see e.g. \cite[page 297]{BB2000}) that 
\begin{equation}
\label{Green}
G_D(x,y) = K_{\alpha}(x-y) - \int_{D^c} K_{\alpha}(z-y) \omega_D^x(dz),
\end{equation}
for any $x, y \in D$. 
For $N > \alpha$ the kernel $K_{\alpha}$ denotes the Riesz kernel given by 
$$
K_\alpha(x) = C_{N,\alpha} |x|^{\alpha - N},
$$
where $C_{N,\alpha} = \Gamma((N-\alpha)/2)/(2^{\alpha} \pi^{N/2} \Gamma(\alpha/2))$. For $1 = N \le \alpha$ the kernel $K_{\alpha}$ denotes the so-called {\em compensated Riesz kernel}, given by (see e.g. \cite[page 296]{BB2000})
$$
K_\alpha(x) = \frac{|x|^{\alpha-1}}{2 \Gamma(\alpha) \cos(\pi \alpha/2)}, \quad \text{when $1 = N < \alpha$}
$$
and
$$
K_\alpha(x) = \frac{1}{\pi} \log\frac{1}{|x|}, \quad \text{when $1 = N = \alpha$}.
$$

It is well-known that for any open set $D\subset \R^N$ and $u\in C^2(D)\cap \calL$ the expression $(-\Delta)^{\alpha/2}u(x)$ is well-defined for $x \in D$ and we have the following (see \cite[page 9]{book2009}).
 
\begin{lemma}\label{scaling}
	Let $D\subset \R^N$ be an open bounded set and $u\in C^2(D)\cap \calL$. Then
	\[
	(-\Delta)^{\alpha/2}u(tx)=t^{\alpha} \big[(-\Delta)^{\alpha/2} u\big](tx) \qquad\text{ for all $t>0$ and $x\in t^{-1}D$.}
	\]
\end{lemma}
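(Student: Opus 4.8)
The plan is to reduce the identity to the pointwise definition \eqref{deffrac} by a linear change of variables. Write $u_t(x):=u(tx)$ for the dilated function. The first (routine) task is to check that $u_t\in C^2(t^{-1}D)\cap\calL$, so that both sides of the asserted identity are meaningful: smoothness of $u_t$ on $t^{-1}D$ is clear, and the weighted integrability follows from the substitution $z=tx$, since
\[
\int_{\rn}\frac{|u(tx)|}{(1+|x|)^{N+\alpha}}\,dx
= t^{-N}\int_{\rn}\frac{|u(z)|}{(1+|z|/t)^{N+\alpha}}\,dz
\le c_t\int_{\rn}\frac{|u(z)|}{(1+|z|)^{N+\alpha}}\,dz<\infty,
\]
where $c_t>0$ depends only on $t$ (it comes from the elementary bound $1+|z|/t\ge \min(1,t^{-1})(1+|z|)$). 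Moreover $x\in t^{-1}D$ means $tx\in D$, so by the fact recalled just before the lemma the value $[(-\Delta)^{\alpha/2}u](tx)$ is well-defined, and likewise $(-\Delta)^{\alpha/2}u_t(x)$ is well-defined on $t^{-1}D$.

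Next I would carry out the computation. Fix $x\in t^{-1}D$. By \eqref{deffrac},
\[
(-\Delta)^{\alpha/2}u_t(x)=c_{N,\alpha}\lim_{\eps\downarrow 0}\int_{|x-y|>\eps}\frac{u(tx)-u(ty)}{|x-y|^{N+\alpha}}\,dy.
\]
In the inner integral substitute $z=ty$; then $dy=t^{-N}\,dz$, $|x-y|=t^{-1}|tx-z|$, and the domain $\{|x-y|>\eps\}$ is mapped onto $\{|tx-z|>t\eps\}$, so that
\[
\int_{|x-y|>\eps}\frac{u(tx)-u(ty)}{|x-y|^{N+\alpha}}\,dy
= t^{-N}\int_{|tx-z|>t\eps}\frac{u(tx)-u(z)}{t^{-(N+\alpha)}|tx-z|^{N+\alpha}}\,dz
= t^{\alpha}\int_{|tx-z|>t\eps}\frac{u(tx)-u(z)}{|tx-z|^{N+\alpha}}\,dz.
\]
Since $t\eps\downarrow 0$ as $\eps\downarrow 0$, letting $\eps\downarrow 0$ and invoking \eqref{deffrac} once more, now for the function $u$ at the point $tx\in D$, yields $(-\Delta)^{\alpha/2}u_t(x)=t^{\alpha}\,[(-\Delta)^{\alpha/2}u](tx)$, which is exactly the claim.

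There is essentially no serious obstacle here: the only point worth a sentence is that the principal-value limit genuinely commutes with the substitution, which is immediate because $y\mapsto ty$ is a fixed linear bijection carrying each excised ball $\{|x-y|\le\eps\}$ exactly onto the excised ball $\{|tx-z|\le t\eps\}$, and the limit is already known to exist by the statement preceding the lemma, so no rearrangement of a merely conditionally convergent integral is involved. Alternatively one could avoid principal values altogether by splitting each integral into a bounded part near the singularity---where the $C^2$-bound on $u$ makes the second-difference integrand absolutely integrable---and a tail part controlled by $u\in\calL$, and then apply the change of variables to each part separately.
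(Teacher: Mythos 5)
Your proof is correct. The paper does not actually prove Lemma~\ref{scaling}; it simply states that the identity is well known and points to \cite[page 9]{book2009}. Your argument---change of variables $z=ty$ in the principal-value integral \eqref{deffrac}, together with the routine check that $u_t:=u(t\cdot)\in C^2(t^{-1}D)\cap\calL$---is the standard self-contained derivation and matches what one would find in that reference, so it fills in the omitted details faithfully.
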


Let $D \subset \rn$ be an open set, $u \in \calL$ and assume that there exists a bounded Borel function $g: D \to \R$ such that
$$
(-\Delta)^{\alpha/2} u = g
$$
as distributions in $D$. Then for any $t > 1$ we define the distribution $[(-\Delta)^{\alpha/2} u](t\cdot)$ in $t^{-1} D$ by 
$$
[(-\Delta)^{\alpha/2} u](t\cdot) = g(t\cdot).
$$
The following generalization of Lemma \ref{scaling} holds.

\begin{lemma}\label{weakscaling}
Let $D \subset \rn$ be an open set, $u \in \calL$ and assume that there exists a locally integrable Borel function $g: \R^N \to \R$ such that $(-\Delta)^{\alpha/2} u = g$ as distributions in $D$. Let $t > 1$ and put $v(x) = u(tx)$. Then 
	\begin{equation}
	\label{weak}
	t^{\alpha} [(-\Delta)^{\alpha/2} u](t\cdot) = (-\Delta)^{\alpha/2} v,
	\end{equation}
	as distributions in $t^{-1} D$.
\end{lemma}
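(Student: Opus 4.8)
The plan is to test both sides of \eqref{weak} against an arbitrary $\phi\in C^\infty_c(t^{-1}D)$ and, via the change of variables $y=tx$, to reduce the identity to the given distributional equation $(-\Delta)^{\alpha/2}u=g$ in $D$. Before that, I would record that $v\in\calL$, so that the left-hand side of \eqref{weak} is a bona fide distribution: substituting $y=tx$ and using $1+|y|/t\ge t^{-1}(1+|y|)$ (valid for $t\ge 1$),
$$\int_{\rn}\frac{|u(tx)|}{(1+|x|)^{N+\alpha}}\,dx = t^{-N}\int_{\rn}\frac{|u(y)|}{(1+|y|/t)^{N+\alpha}}\,dy \le t^{\alpha}\int_{\rn}\frac{|u(y)|}{(1+|y|)^{N+\alpha}}\,dy<\infty .$$
Since $\phi\in C^\infty_c(\rn)\subset C^2_c(\rn)$, the function $(-\Delta)^{\alpha/2}\phi$ is bounded and decays like $|x|^{-N-\alpha}$, so the pairing $\langle v,(-\Delta)^{\alpha/2}\phi\rangle$, and all integrals appearing below, converge absolutely.

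Now the core computation. Fix $\phi\in C^\infty_c(t^{-1}D)$ and set $\eta(x):=\phi(x/t)$; since $\supp\eta=t\,\supp\phi\subset D$ we have $\eta\in C^\infty_c(D)$. Unwinding the distributional definition of $(-\Delta)^{\alpha/2}v$ and substituting $y=tx$,
$$\langle(-\Delta)^{\alpha/2}v,\phi\rangle=\langle v,(-\Delta)^{\alpha/2}\phi\rangle=\int_{\rn}u(tx)\,(-\Delta)^{\alpha/2}\phi(x)\,dx = t^{-N}\int_{\rn}u(y)\,\big[(-\Delta)^{\alpha/2}\phi\big](y/t)\,dy .$$
The next step is the pointwise scaling identity $\big[(-\Delta)^{\alpha/2}\phi\big](y/t)=t^{\alpha}\,(-\Delta)^{\alpha/2}\eta(y)$, which follows from a one-line change of variables in the definition \eqref{deffrac} (legitimate since $\phi\in C^2_c(\rn)$); equivalently, it is Lemma \ref{scaling} applied to $\phi$ with dilation factor $1/t$. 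Substituting it in,
$$\langle(-\Delta)^{\alpha/2}v,\phi\rangle = t^{\alpha-N}\int_{\rn}u(y)\,(-\Delta)^{\alpha/2}\eta(y)\,dy = t^{\alpha-N}\langle u,(-\Delta)^{\alpha/2}\eta\rangle = t^{\alpha-N}\langle g,\eta\rangle ,$$
where the last equality uses $\eta\in C^\infty_c(D)$ together with $(-\Delta)^{\alpha/2}u=g$ as distributions in $D$ (this pairing is finite since $g$ is locally integrable and $\eta$ bounded with compact support). Finally, undoing the substitution,
$$t^{\alpha-N}\langle g,\eta\rangle = t^{\alpha-N}\int_{\rn}g(y)\phi(y/t)\,dy = t^{\alpha}\int_{\rn}g(tx)\phi(x)\,dx = \langle t^{\alpha}g(t\cdot),\phi\rangle = \langle t^{\alpha}[(-\Delta)^{\alpha/2}u](t\cdot),\phi\rangle ,$$
the last equality being precisely the definition of the distribution $[(-\Delta)^{\alpha/2}u](t\cdot)$. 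Since $\phi\in C^\infty_c(t^{-1}D)$ was arbitrary, \eqref{weak} follows.

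I do not expect a genuine obstacle here: every derivative in the argument falls on the smooth compactly supported test functions $\phi$ and $\eta$, so no regularity or convergence issue concerning $u$ or $g$ arises beyond $v\in\calL$ and the local integrability of $g$. The one point deserving a little care is the test-function scaling identity, together with the bookkeeping of Jacobian factors and of the convention defining $[(-\Delta)^{\alpha/2}u](t\cdot)$; but the scaling identity is immediate (change of variables in \eqref{deffrac}, or Lemma \ref{scaling}), so what remains is routine verification.
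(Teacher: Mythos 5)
Your proof is correct and is essentially the paper's proof read in reverse: the paper starts from $t^{\alpha}\langle g(t\cdot),\phi\rangle$, changes variables, passes the operator to the test function, applies Lemma \ref{scaling}, and changes variables back to reach $\langle(-\Delta)^{\alpha/2}v,\phi\rangle$, whereas you run the identical chain of equalities from $\langle(-\Delta)^{\alpha/2}v,\phi\rangle$ to $\langle t^{\alpha}g(t\cdot),\phi\rangle$. The only genuine addition is your preliminary check that $v\in\calL$ (so that $(-\Delta)^{\alpha/2}v$ is a well-defined distribution), which the paper leaves implicit; that is a harmless and reasonable piece of housekeeping.
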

\begin{remark}
	\label{remarkweakscaling}
	Equivalently (\ref{weak}) may be formulated as
	$$
	t^{\alpha} [(-\Delta)^{\alpha/2}u](tx) = (-\Delta)^{\alpha/2}u(tx) \quad \quad \text{for almost all $x \in t^{-1}D$.}
	$$
\end{remark}
\begin{proof}
Let $\varphi \in C_c^{\infty}(t^{-1}D)$. We have
	$$
	t^{\alpha} \langle[(-\Delta)^{\alpha/2}u](t\cdot), \varphi\rangle\ =\ t^{\alpha} \langle g(t\cdot), \varphi\rangle\ =\ t^{\alpha} \int_{\rn} g(tx) \varphi(x) \, dx.
	$$
	By substitution $y = tx$ this is equal to
	\begin{eqnarray*}
		t^{\alpha - N} \int_{\rn} g(y) \varphi\left(\frac{y}{t}\right) \, dx
		&=& t^{\alpha - N} \left\langle(-\Delta)^{\alpha/2}u,\varphi\left(\frac{\cdot}{t}\right)\right\rangle\\
		&=& t^{\alpha - N} \left\langle u,(-\Delta)^{\alpha/2}\varphi\left(\frac{\cdot}{t}\right)\right\rangle\\
		&=& t^{\alpha - N} \int_{\rn} u(y) (-\Delta)^{\alpha/2}\varphi\left(\frac{y}{t}\right) \, dy.
	\end{eqnarray*}
	By Lemma \ref{scaling} it equals
	$$
	t^{- N} \int_{\rn} u(y) \left[(-\Delta)^{\alpha/2}\varphi\right]\left(\frac{y}{t}\right) \, dy.
	$$
	Substituting $x = y/t$ this finally gives
	$$
	\int_{\rn} v(x) (-\Delta)^{\alpha/2}\varphi(x) \, dx\ =\ \langle v,(-\Delta)^{\alpha/2} \varphi\rangle\ =\ \langle (-\Delta)^{\alpha/2} v, \varphi\rangle.
	$$
\end{proof}
Assume $D \subset \rn$ is an open set, $g, h \in \calL$, $q$, $g$ are bounded Borel functions on $D$ and let us consider the following problem
\begin{equation}
\label{linear-prob}
(-\Delta)^{\alpha/2}u - qu = g \ge 0, \quad \text{on} \, \, D \quad \quad
u = h \ge 0, \quad \quad \text{on} \, \, D^c.
\end{equation}
We say that $u$ is a {\it{solution}} of (\ref{linear-prob}) if $u \in \calL \cap L^{\infty}(D)$, $u$ is continuous on $\overline{D}$,  $(-\Delta)^{\alpha/2}u - qu = g$ as distributions in $D$ and $u=h$ holds pointwise on $D^c$.
To prove our main statements, we use  the following variant of the maximum principle. 

\begin{lemma}\label{wmp}
Let $D\subset \R^N$ be an open bounded set which satisfies a uniform exterior cone condition. Assume that  $q \le 0$ on $D$ and there is an open bounded set $D_0\subset \subset \R^N\setminus D$ such that $h$ is continuous and bounded on $D^c \setminus D_0$. Then a solution $u$ of (\ref{linear-prob}) exists and it is unique. We have $u \ge 0$ on D. If additionally the set $\{x \in D^c: \, h(x) > 0)\}$ has positive Lebesgue measure then $u > 0$ on D.
\end{lemma}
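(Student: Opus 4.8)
The plan is to separate the statement into three parts: existence, uniqueness, and positivity. For \emph{uniqueness}, suppose $u_1,u_2$ are two solutions; then $w=u_1-u_2$ satisfies $(-\Delta)^{\alpha/2}w - qw = 0$ as distributions in $D$ and $w=0$ on $D^c$. Since $w$ is continuous on $\overline D$ and vanishes on $D^c$, it attains a maximum at some $x_0\in\overline D$; if this maximum is positive it is attained at an interior point of $D$ (as $w=0$ on $\partial D\subset D^c$), and there, using the pointwise representation of $(-\Delta)^{\alpha/2}$, one gets $(-\Delta)^{\alpha/2}w(x_0)\ge 0$ with strict inequality unless $w$ is constant, while $q(x_0)w(x_0)\le 0$ because $q\le 0$ and $w(x_0)>0$; combined with the equation this forces $w\equiv\text{const}$, hence $w\equiv 0$ since it is $0$ on $D^c$. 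The same applied to $-w$ gives $w\le 0$, so $w\equiv 0$. (One must be slightly careful that the equation holds only in the distributional sense; but since $g\equiv 0$ here the right-hand side is smooth, so elliptic regularity for the fractional Laplacian upgrades $w$ to be $C^{\alpha+\epsilon}$, hence the pointwise argument is legitimate; alternatively one invokes a known comparison/maximum principle for the fractional Laplacian with a zeroth-order term of the right sign.)

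For \emph{existence}, the idea is to construct $u$ via the Green function and harmonic measure of $D$ (with respect to $(-\Delta)^{\alpha/2}$), solving first the linear problem with $q\equiv 0$, namely
\[
u_0(x)=\int_D G_D(x,y)\,g(y)\,dy+\int_{D^c}h(z)\,\omega_D^x(dz),
\]
which is well defined and continuous on $\overline D$ under the stated hypotheses on $D$ (uniform exterior cone condition, so that boundary points are regular and $\omega_D^x$ depends continuously on $x$ up to $\partial D$) and on $h$ (continuous and bounded away from the compact set $D_0$, so that the harmonic-measure integral converges), and satisfies $(-\Delta)^{\alpha/2}u_0=g$ in $D$, $u_0=h$ on $D^c$. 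To incorporate the term $-qu$ one sets up the fixed-point map $v\mapsto \int_D G_D(\cdot,y)\big(q(y)v(y)+g(y)\big)\,dy+\int_{D^c}h\,\omega_D^\cdot(dz)$. Because $q\le 0$ and $q$ bounded, this map is monotone and order-preserving; one runs a monotone iteration starting from a suitable supersolution (e.g. $u_0$ itself, since $q\le 0$ makes $u_0$ a supersolution of the full problem), obtaining a decreasing sequence that converges to a solution; alternatively, one uses the spectral/variational framework (the bilinear form $\int\int \frac{(u(x)-u(y))^2}{|x-y|^{N+\alpha}} - \int q u^2$ is coercive precisely because $-q\ge 0$) to solve the problem directly, and then bootstraps regularity to get continuity on $\overline D$.

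For \emph{positivity}, once existence is known, note $u\ge 0$: apply the maximum-principle argument of the uniqueness part to $-u$, which is a subsolution ($(-\Delta)^{\alpha/2}(-u)-q(-u)=-g\le 0$) with $-u=-h\le 0$ on $D^c$, so $-u\le 0$ in $D$. For the strict inequality, suppose $u(x_1)=0$ at some $x_1\in D$; then $x_1$ is an interior minimum of $u$ with value $0$, and the pointwise formula gives
\[
(-\Delta)^{\alpha/2}u(x_1)=c_{N,\alpha}\int_{\rn}\frac{u(x_1)-u(y)}{|x_1-y|^{N+\alpha}}\,dy=-c_{N,\alpha}\int_{\rn}\frac{u(y)}{|x_1-y|^{N+\alpha}}\,dy\le 0,
\]
and it is strictly negative unless $u\equiv 0$ a.e., because $u=h\not\equiv 0$ on a set of positive measure in $D^c$ by hypothesis; on the other hand $(-\Delta)^{\alpha/2}u(x_1)=q(x_1)u(x_1)+g(x_1)=g(x_1)\ge 0$, a contradiction. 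Hence $u>0$ throughout $D$. The main obstacle in all of this is the \emph{existence/regularity} step: one has to make sure that under only a uniform exterior cone condition the potential $\int_D G_D(\cdot,y)g(y)\,dy$ and the harmonic-measure term are genuinely continuous up to $\partial D$ (regularity of boundary points for the fractional Laplacian) and that the distributional equation they satisfy can be combined with the zeroth-order term $-qu$ without losing this continuity; the monotone-iteration (or variational) scheme handles this, but the boundary regularity under the cone condition is the technically delicate point, and is where one leans on the cited references for Green functions and harmonic measures. The rest — the two applications of the fractional maximum principle — is then routine.
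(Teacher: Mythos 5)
Your route is analytic (Green-function representation plus a pointwise maximum principle), which is genuinely different from the paper's. The paper works probabilistically: it defines the candidate solution by the Feynman--Kac formula $\tilde u(x)=E^x\bigl(h(X_{\tau_D})\,e_q(\tau_D)\bigr)+Vg(x)$, uses the gauge theorem --- this is precisely where $q\le 0$ enters, to give $\sup_{x}E^x(e_q(\tau_D))\le 1$ --- to get finiteness of $\tilde u$, cites Bogdan--Byczkowski to identify $\tilde u$ with a distributional solution and for uniqueness, and reads nonnegativity and strict positivity directly off the formula (strict positivity because the exit distribution $X_{\tau_D}$ has an absolutely continuous law, so $E^x(h(X_{\tau_D})e_q(\tau_D))>0$ as soon as $h>0$ on a set of positive measure). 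As written, your analytic plan has two concrete gaps.

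First, the fixed-point map $T(v)=G_D(qv+g)+\int_{D^c}h\,\omega_D^{\cdot}(dz)$ is \emph{not} order-preserving: since $q\le 0$, $v_1\le v_2$ implies $qv_1\ge qv_2$ and hence $T(v_1)\ge T(v_2)$. So $T$ is order-\emph{reversing}, and iterating from the supersolution $u_0$ produces an oscillating sequence, not a monotone one; the scheme you describe does not converge as claimed. (It can be repaired by a $\lambda$-shift, rewriting the equation with $\lambda+q\ge 0$ and using the $\lambda$-resolvent Green operator, which the gauge theorem handles in one stroke on the probabilistic side.) Second, your uniqueness and strict-positivity arguments evaluate $(-\Delta)^{\alpha/2}w$ pointwise at an extremum, which needs $w\in C^{\alpha+\epsilon}$ near that point. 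Here $q$ is only bounded Borel, so the right-hand side $qw$ does not gain regularity, interior estimates stall at $C^{\alpha}_{loc}$ (or $C^{\alpha-}_{loc}$ for $\alpha=1$), and the pointwise singular integral at $x_0$ is not known to converge. Your parenthetical fallback (``invoke a known comparison/maximum principle with a zeroth-order term of the right sign'') is the right instinct, but it is exactly the step that needs a precise distributional or viscosity-type statement and a citation; the paper sidesteps it entirely by reading both nonnegativity and positivity off the probabilistic representation.
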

\begin{proof}
The proof will be done in the framework presented in \cite{BB1999,BB2000}. Let $(X_t,P^x)$ denote the standard symmetric $\alpha$-stable process in $\rn$ generated by $-(-\Delta)^{\alpha/2}$. Denote by $E^x$ the expected value corresponding to the process $X_t$ starting from $x$ and let
$$
\tau_D = \inf\{t \ge 0: \, X_t \notin D\}.
$$
be the first exit time from $D$. Moreover, denote $e_q(\tau_D) = \exp\left(\int_0^{\tau_D} q(X_s) \, ds\right),$ and
$$
E^x(e_q(\tau_D)) = E^x\left( \exp\left(\int_0^{\tau_D} q(X_s) \, ds\right)\right) \quad \text{a gauge}.
$$
Since $q \le 0$ on $D$ we get $\sup_{x \in D} E^x(e_q(\tau_D)) \le 1$.
Define
\begin{eqnarray*}
	\tilde{u}(x) &=& E^x(h(X(\tau_D)) e_q(\tau_D)) + Vg(x) , \quad x \in D\\
	\tilde{u}(x) &=& h(x), \quad x \in D^c,
\end{eqnarray*}
where $V$ is the $q$-Green operator corresponding to the Schr{\"o}dinger operator based on the fractional Laplacian (for the formal definition of $V$ see \cite[page 58]{BB1999}). By the gauge theorem (see \cite[page 59]{BB1999}), properties of $h$, $q$ and standard estimates we get $\tilde{u}(x)<\infty$ for every $x\in D$. Clearly, $\tilde{u}(x) \ge 0$ for every $x \in D$. If the set $\{x \in D^c: \, h(x) > 0\}$ has positive Lebesgue measure then $\tilde{u}(x) \ge E^x(h(X(\tau_D) e_q(\tau_D))) > 0$ for any $x \in D$. By \cite[Theorem 4.1]{BB2000} and \cite[Proposition 3.16]{BB1999} $\tilde{u}$ is a solution of (\ref{linear-prob}).

Assume that $\tilde{\tilde{u}}$ is another solution of (\ref{linear-prob}). Put $v = \tilde{u} - \tilde{\tilde{u}}$. Clearly $v \in L^{\infty}(D)$ and it satisfies 
\begin{eqnarray*}
(-\Delta)^{\alpha/2}v - qv &=& 0, \quad \quad \text{on} \, \, D\\
v &=& 0, \quad \quad \text{on} \, \, D^c.
\end{eqnarray*}
By \cite[Remark 6.3]{BB2000} and arguments from the proof of \cite[Lemma 5.4]{BB1999} $v \equiv 0$ on $\rn$.
\end{proof}

\begin{cor}\label{wmp2}
Let $D\subset \R^N$ be an open set which satisfies a uniform exterior cone condition, $q\leq 0$ on $D$ and $h$ is continuous and bounded on 
$D^c$. If $u$ is a solution of \eqref{linear-prob}, and $\liminf\limits_{|x|\to\infty} u(x)\geq0$, then $u\geq 0$ on $D$.
\end{cor}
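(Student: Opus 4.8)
The plan is to exhaust the unbounded set $D$ by the bounded domains $D_R:=D\cap B_R(0)$ and to apply Lemma \ref{wmp} on each of them to a vertical shift of $u$ that makes the exterior data nonnegative. Fix $\eps>0$. Since $\liminf_{|x|\to\infty}u(x)\ge 0$, choose $R_0>0$ with $u\ge -\eps$ on $\{|x|>R_0\}$. If $D=\emptyset$ there is nothing to prove; otherwise, for every $R>R_0$ large enough that $D_R\ne\emptyset$, put $v:=u+\eps$. I claim that $v$ is a solution of a problem of the form \eqref{linear-prob} on the bounded set $D_R$ with nonnegative right-hand side and nonnegative exterior datum; granting this, Lemma \ref{wmp} gives $v\ge 0$, i.e.\ $u\ge-\eps$, on $D_R$. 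Since $\bigcup_{R>R_0}D_R=D$, this forces $u\ge-\eps$ on all of $D$, and letting $\eps\downarrow 0$ proves the corollary.

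To set up the claim, note two elementary inclusions: $D_R^{\,c}\supseteq D^{\,c}$ and $D_R^{\,c}\supseteq B_R(0)^c$. They give at once that $D_R$ satisfies a uniform exterior cone condition, because every boundary point of $D_R$ either lies in $\partial D\cap B_R(0)$ --- where the exterior cone of $D$, contained in $D^{\,c}\subseteq D_R^{\,c}$, serves --- or on $\partial B_R(0)$ --- where the exterior cone of the ball, contained in $B_R(0)^c\subseteq D_R^{\,c}$, serves --- with apertures and heights chosen uniformly. Next, since the fractional Laplacian annihilates constants as a distribution ($\langle(-\Delta)^{\alpha/2}\eps,\varphi\rangle=\eps\int_{\rn}(-\Delta)^{\alpha/2}\varphi=0$ for $\varphi\in C_c^\infty$), the distributional identity $(-\Delta)^{\alpha/2}u-qu=g$ on $D\supseteq D_R$ yields $(-\Delta)^{\alpha/2}v-qv=g-\eps q$ on $D_R$, and $g-\eps q$ is a bounded Borel function on $D_R$ with $g-\eps q\ge g\ge 0$ because $q\le 0$. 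Finally, on $D_R^{\,c}=D^{\,c}\cup(D\cap\{|x|\ge R\})$ one has $v=h+\eps\ge\eps>0$ on $D^{\,c}$ and $v=u+\eps\ge 0$ on $D\cap\{|x|\ge R\}$ (using $u\ge-\eps$ on $\{|x|>R\}$ and continuity of $u$ on $\overline D$ up to $\partial B_R(0)$), so $v|_{D_R^{\,c}}\ge 0$; it is moreover continuous and bounded on $D_R^{\,c}$, since $u$ is bounded and continuous on $\overline D$, $h$ is bounded and continuous on $D^{\,c}$, and $u=h$ on $\partial D\subseteq D^{\,c}$. Together with $v\in\calL$ and $v$ continuous on $\overline{D_R}\subseteq\overline D$, all hypotheses of Lemma \ref{wmp} are met (with $D_0$ any small ball in $\R^N\setminus\overline{D_R}$), and the claim follows.

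The one step that needs genuine care is exactly this bookkeeping --- keeping the uniform exterior cone condition for $D_R$ and, above all, checking continuity of the shifted exterior datum $v|_{D_R^{\,c}}$ across $\partial D$ --- but both reduce to the inclusions $D^{\,c},B_R(0)^c\subseteq D_R^{\,c}$ and to the compatibility $u=h$ on $\partial D$ that is built into the notion of a solution of \eqref{linear-prob}; there is no analytic obstacle beyond invoking Lemma \ref{wmp}. In particular, when $D$ happens to be bounded one may simply take $R$ with $D\subseteq B_R(0)$, and the corollary reduces to Lemma \ref{wmp} itself.
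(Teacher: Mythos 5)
Your proof is correct and follows essentially the same approach as the paper's: shift $u$ by a small positive constant (your $\eps$, the paper's $1/n$), use the $\liminf$ hypothesis to make the shifted function nonnegative outside a large ball, truncate $D$ by intersecting with balls so that the truncated domain is bounded and still satisfies the uniform exterior cone condition, apply Lemma \ref{wmp} there, and then pass to the limit. You decouple the two limits ($R\to\infty$, then $\eps\downarrow 0$) where the paper couples them via $R_n=r_n+n$, and you spell out the verification of the hypotheses of Lemma \ref{wmp} (exterior cone condition for $D\cap B_R(0)$, continuity of the shifted exterior datum across $\partial D$, sign of $g-\eps q$) that the paper leaves implicit, but the argument is the same.
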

\begin{proof}
Set $u_{n}:=u+\frac1n$ on $\R^N$ for $n\in\N$. Note that $u_n$ is a solution of \eqref{linear-prob} with $h$, $g$ replaced by $h_n$, $g_n$, where $g_{n} = g - \frac{q}{n}$, $h_n= h + \frac1n$. Since $\liminf\limits_{|x|\to\infty} u(x)\geq0$, for every $n\in\N$ there is $r_n>0$ such that $u_n\geq 0$ on $(B_{r_n}(0)\cap D)^c$ . Set $R_n=r_n+n$. Since $B_{R_n}(0)\cap D$ satisfies again a uniform exterior cone condition, Lemma \ref{wmp} implies $u_n\geq 0$ on $B_{R_n}(0) \cap D$. Hence the claim follows for $n\to +\infty$.
\end{proof}

\begin{remark}
	Note that the above framework is in the sense of distributions. For maximum principles in the variational sense see e.g. \cite{JW14,J16}.
\end{remark}

\section{Starshapedness}

For the sake of completeness we give here the following trivial lemma.
\begin{lemma}\label{lmstar}
Let  $u:\R^N\to\R$ such that $M=\max_{\R^N}u=u(0)$. Then the superlevel sets $U(\ell)$, $\ell\in\R$, of $u$ are all starshaped if and only if $u(tx)\leq u(x)$ for every $x\in\rn$ and every $t\geq 1$.
\end{lemma}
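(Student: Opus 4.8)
The plan is to prove both implications straight from the definition \eqref{stardef} of starshapedness, the only tool being the substitution $t\leftrightarrow 1/t$ relating a point to its dilates. No analysis is involved; the one point that deserves care is the role of the hypothesis $M=\max_{\R^N}u=u(0)$, which is exactly what makes the origin a legitimate ``center'', i.e.\ it guarantees $0\in U(\ell)$ whenever $U(\ell)\neq\emptyset$.

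For the ``if'' direction I would assume $u(tx)\le u(x)$ for all $x\in\R^N$ and all $t\ge 1$, and fix $\ell\in\R$. If $\ell>M$ then $U(\ell)=\emptyset$, which is vacuously starshaped, so I may assume $\ell\le M$; then $0\in U(\ell)$ since $u(0)=M\ge\ell$. Given $x\in U(\ell)$ and $s\in[0,1]$, the case $s=0$ gives $sx=0\in U(\ell)$, while for $s\in(0,1]$ I set $t=1/s\ge 1$ and apply the hypothesis at the point $sx$: $u(x)=u(t\cdot sx)\le u(sx)$, hence $u(sx)\ge u(x)\ge\ell$, i.e.\ $sx\in U(\ell)$. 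Thus $sU(\ell)\subseteq U(\ell)$ for every $s\in[0,1]$, which is precisely \eqref{stardef}.

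For the ``only if'' direction I would assume every superlevel set is starshaped, fix $x\in\R^N$ and $t\ge 1$, and set $\ell:=u(tx)$, so that $tx\in U(\ell)$. Since $1/t\in[0,1]$ and $U(\ell)$ is starshaped, $(1/t)(tx)=x\in U(\ell)$, that is $u(x)\ge\ell=u(tx)$, as required.

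I do not anticipate any genuine obstacle; the only things to watch are the degenerate cases (empty superlevel sets when $\ell>M$, and $s=0$) and making explicit the remark that the maximum being attained at the origin is what allows one to treat $0$ as the base point of starshapedness.
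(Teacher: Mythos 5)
Your proof is correct and follows essentially the same substitution argument as the paper ($t \leftrightarrow 1/s$ between the dilation parameter and the starshapedness parameter), differing only in cosmetic handling of the degenerate cases. No issues.
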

\begin{proof}
Assume $U(\ell)=\{x\in\R^N\,:\,u(x)\geq\ell\}$ is starshaped for every $\ell\in\R$. By \eqref{stardef} this means that $sU(\ell)\subseteq U(\ell)$ for every $s \in [0,1)$.
Now set $tx=y$ and $\ell=u(y)$; then $x=sy$ where $s=t^{-1}\in(0,1]$, whence $x\in U(\ell)$, i.e. $u(x)\geq\ell=u(tx)$.

Conversely, assume $u(x)\geq u(tx)$ for every $x\in\R^N$ and every $t\geq 1$. Now take $\ell\in\R$: if $\ell\leq\inf_{\R^N}u$ or $\ell>M$ there is nothing to prove. Then let $\inf_{\R^N}u<\ell\leq M$. The superlevel set 
$U(\ell)=\{x\in\R^N\,:\,u(x)\geq\ell\}$ is starshaped if and only if $sU(\ell)\subseteq U(\ell)$ for every $s\in[0,1]$, see \eqref{stardef}.
If $s=0$ it is trivial, otherwise let $x\in U(\ell)$, that is $u(x)\geq\ell$: we want to prove that $y=sx\in U(\ell)$ as well, i.e. $u(y)\geq\ell$. But 
$x=ty$ where $t=s^{-1}\geq 1$, then $\ell\leq u(x)=u(ty)\leq u(y)$ and the prove is complete.
\end{proof}
\medskip
Now we can proceed to the proofs of our main results.

\begin{proof}[Proof of Theorem \ref{main-thm3}]
For any $t>1$ set 
$$u_t(x)=u(x)-u(tx)\quad x\in\R^N\,.$$
Thanks to Lemma \ref{lmstar}, the starshapedness of the level sets of $u$  is equivalent to  
\begin{equation}\label{ut<0}
u_t\geq 0\,\,\text{ in }\R^N\,\,\text{ for }\,\,t>1\,.
\end{equation}

Observe that since the superlevel sets of $b_0$ and $b_1$ are starshaped, we have $u_t\geq 0$ in $\R^N\setminus D_0$ and in $t^{-1}\overline{D}_1$ and
\begin{equation}\label{utboundary}
u_t(x)\geq 0\quad\text{for }x\in D_0\setminus (t^{-1} D_0)\,\text{ and }\, x\in \overline{D}_1\setminus(t^{-1}\overline{D}_1)\,.
\end{equation}

Put $D_t=(t^{-1}D_0)\setminus\overline{D}_1$. It remains to investigate $u_t$ in $D_t$. Note that if $D_0$ is bounded then for $t$ large enough $D_t$ is empty.

{\bf{Proof of (i).}}
By Lemma \ref{weakscaling} and Remark \ref{remarkweakscaling} we get
\begin{eqnarray*}
(-\Delta)^{\alpha/2}u_t(x)&=&(-\Delta)^{\alpha/2}u(x)-t^{\alpha} \big[(-\Delta)^{\alpha/2} u\big](tx)\\
&=&t^\alpha f(tx,u(tx)) -f(x,u(x))\\
&=&t^\alpha f(tx,u(tx)) - f(x,u(tx)) + f(x,u(tx)) -f(x,u(x)).
\end{eqnarray*}
for almost all $x \in D_t$. For $x \in D_t$ put
$$
q_t(x) = \left\{\begin{array}{cc}      
\dfrac{f(x,u(tx)) -f(x,u(x))}{u_t(x)}, & \text{when} \quad u_t(x) \ne 0, \\ 
0,  & \text{when} \quad u_t(x) = 0.  
 \end{array}\right.
$$
Clearly, $f(x,u(tx)) -f(x,u(x)) = q_t(x) u_t(x)$.

Thus by (F1) we have
$$
(-\Delta)^{\alpha/2}u_t(x) - q_t(x)\,u_t(x) = t^\alpha f(tx,u(tx)) - f(x,u(tx)) \ge 0.
$$
By (F2) $|q_t(x)| \le C$ for $x \in D_t$. By (F3) $q_t(x) \le 0$ for $x \in D_t$.
Recall that $u_t(x) \ge 0$ for $x \in D_t^c$.
Lemma \ref{wmp} implies $u_t(x) \ge 0$ for $x \in D_t$. This finishes the proof in case (i).

{\bf{Proof of (ii).}}
In this case it is enough to show that 
\begin{equation}
\label{utpositive}
u_t(x) > 0 \quad \text{for all $t > 1$  and $x \in D_t$.}
\end{equation}
Put 
$$
t_0 = \sup\{s \in (1,\infty): \, D_s \,\, \text{is not empty}\},
$$
$$
A = \{s \in (1,\infty): \, \text{there exists} \,\, x \in D_s \,\, \text{such that} \,\, u_s(x) \le 0\}
$$
and
$$
t = \sup A.
$$
We put $t = -\infty$ if the set $A$ is empty. 
By strict starshapedness of $D_0$, $D_1$, the fact that $0 < u < 1$ on $D_0 \setminus \overline{D_1}$ and continuity of $u$ we get that $t < t_0<\infty$ (since $D_0$ is bounded).

On the contrary, assume that (\ref{utpositive}) does not hold. Then the set $A$ is not empty so $t > 1$.
Using strict starshapedness of $D_0$, $D_1$ we obtain $u_t(x) > 0$ for $x \in \partial D_t$. By continuity, $u_t(x) \ge 0$ for $x \in D_t$ and there exists $x_0 \in D_t$ such that $u_t(x_0) = 0$.
 
Similarly as before, by Lemma \ref{weakscaling} and Remark \ref{remarkweakscaling} we get
\begin{eqnarray*}
(-\Delta)^{\alpha/2}u_t(x)&=&(-\Delta)^{\alpha/2}u(x)-t^{\alpha} \big[(-\Delta)^{\alpha/2} u\big](tx)\\
&=&t^\alpha f(tx,u(tx)) - f(x,u(tx)) + f(x,u(tx)) -f(x,u(x)).
\end{eqnarray*}
for almost all $x \in D_t$. For $x \in D_t$ put $F(x,t) = f(x,u(tx)) -f(x,u(x))$, $F_+(x,t) = \max(0,F(x,t))$, $F_-(x,t) = \max(0,-F(x,t))$ and 
$$
q_t(x) = \left\{\begin{array}{cc}      
\dfrac{-F_-(x,t)}{u_t(x)}, & \text{when} \quad u_t(x) \ne 0, \\ 
0,  & \text{when} \quad u_t(x) = 0.  
 \end{array}\right.
$$
We have $f(x,u(tx)) -f(x,u(x)) = F_+(x,t) + q_t(x) u_t(x)$.

Using (F1) we obtain
$$
(-\Delta)^{\alpha/2}u_t(x) - q_t(x)\,u_t(x) = t^\alpha f(tx,u(tx)) - f(x,u(tx)) + F_+(x,t) \ge 0.
$$
Clearly, $q_t(x) \le 0$ for $x \in D_t$. By (F2) $|q_t(x)| \le C$ for $x \in D_t$. Note that $u_t(x) > 0$ for $x \in D_1 \setminus t^{-1}D_1$. Clearly, $D_1 \setminus t^{-1}D_1$ has positive Lebesgue measure. Recall that $u_t(x) \ge 0$ for $x \in D_t^c$.
Lemma \ref{wmp} implies $u_t(x) > 0$ for $x \in D_t$. This contradicts $u_t(x_0) = 0$. So (\ref{utpositive}) holds.

{\bf{Proof of (iii).}}
	The proof proceeds exactly as the one of (i), but we use Corollary \ref{wmp2} in place of Lemma \ref{wmp}.
\end{proof}

\begin{proof}[Proof of Theorem \ref{main-thm1}]
Choosing $b_0\equiv 0$ and $b_1\equiv 1$ in Theorem \ref{main-thm3} gives Theorem \ref{main-thm1}.
\end{proof}

\begin{proof}[Proof of Theorem \ref{existence-thm}]
We use \cite[Theorem 1.5]{A15}. We extend $f$ by putting $f(x,u) = f(x,0)=0$ for $u < 0$ and $f(x,u) = f(x,1)$ for $u > 1$ ($x \in D_0\setminus \overline{D_1}$). As a subsolution we take $\underline{u} = 1_{\overline{D_1}}$, as a supersolution we take $\overline{u} = 1_{D_0}$. By \cite[Theorem 1.5]{A15} there exists a unique weak solution $u$ of (\ref{main-prob}) in the sense of \cite[Definition 1.3]{A15}. This solution satisfies $0 \le u \le 1$. Put $D = D_0\setminus \overline{D_1}$. By \cite[Theorem 1.4]{A15} we have 
\begin{equation}
\label{green_rep}
u(x) = -\int_D G_D(x,y) f(y,u(y)) \, dy + h(x), \quad x \in \rn, 
\end{equation}
where $h$ is the unique continuous solution of 
\begin{equation*}
	\begin{aligned}
		-(-\Delta)^{\alpha/2}h&=0 &&\text{ in $D$}\\
		h&=0&& \text{in $\R^N\setminus D_0$,}\\
		h&=1&& \text{in $\overline{D}_1$.}
	\end{aligned}
\end{equation*}
By \cite[Lemma 5.3]{BB2000} we get (\ref{weak1}). It is well known \cite[page 57]{BB1999} that $\int_D G_D(x,y) f(y,u(y)) \, dy$ is continuous on $D$. Hence $u$ is a solution of (\ref{main-prob}). Now we show that $0<u<1$ in $D$. Since $f(x,0)=0$ for all $x\in D$ by (F4), $u\geq0$ in $D$ we have by (F2) that $q:D\to \R$, 
$$q(x)=\left\{\begin{array}{rl}-\frac{f(x,u(x))}{u(x)}\,\,&\text{ for }x\in D\text{ such that }u(x)\neq0\\
0\,\,&\text{otherwise}\,,\end{array}\right.
$$ 
is a bounded function which satisfies due to (F3) $q\leq 0$ in $D$. Hence for a.e. $x\in D$ we have
\[
(-\Delta)^{\alpha/2}u(x)-q(x)u(x)=-f(x,u(x))-q(x)u(x)=0.
\]
Since $u\equiv 1$ in $\overline{D_1}$, Lemma \ref{wmp} implies $u>0$ in $D$. Moreover, for $v=1-u$ we have for a.e. $x\in D$
\[
(-\Delta)^{\alpha/2}v(x)=-(-\Delta)^{\alpha/2}u(x)=f(x,u(x))\geq0
\]
Since $v\equiv 1$ in $\R^N\setminus D_0$ and $v\equiv 0$ in $D_1$, Lemma \ref{wmp} implies $v>0$ in $D$ and thus $1>u$ in $D$ as claimed. The assertions on the shape of the superlevel sets of $u$ now follow from Theorem \ref{main-thm1}.
\end{proof}

\begin{proof}[Proof of Corollary \ref{cor1}]
It is well known that there exists a unique solution of \eqref{main-prob} (see e.g. \cite[(1.49), (1.53)]{book2009}, \cite[page 57]{BB1999}). The assertion follows from Theorem \ref{main-thm1}.
\end{proof}

\begin{proof}[Proof of Corollary \ref{cor2}]
Under the assumptions of this corollary, by the arguments from \cite{BB1999}, it is well-known that there exists a unique solution $u$ of \eqref{main-prob} which satisfies $0 < u < 1$ in $D_0\setminus \overline{D}_1$. The conditions (F0), (F1), (F2) and (F3) are clearly satisfied so the assertion follows from Theorem \ref{main-thm1}.
\end{proof}

%\begin{proof}[Proof of Theorem \ref{main-thm2}]
%The proof proceeds exactly as the one of Theorem \ref{main-thm1} (i), but we use Corollary \ref{wmp2} in place of Lemma \ref{wmp} .
%\end{proof}

\begin{proof}[Proof of Corollary \ref{cor3}]
If $\beta\geq \gamma$ then clearly $f$ satisfies (F0), (F1), (F2). Hence (i) follows from Theorem \ref{main-thm1}(ii). If $\beta\geq \gamma p$ then $\partial_u f(x,u)= \beta-\gamma pu^{p-1}\geq 0$. Hence (F3) is satisfied and (ii) follows from Theorem \ref{existence-thm}.
\end{proof}

\begin{proof}[Proof of Theorem \ref{Green-thm}]

Case 1. $N > \alpha$.

We may assume that $y = 0$. Clearly $u(x) = 0$ when $x \notin D$.
For any $t>1$ set 
$$u_t(x)=u(x)-u(tx)\quad \text{for} \quad x\in\R^N \setminus \{0\} \quad \text{and} \quad u_t(0) = 0.$$
Then, thanks to Lemma \ref{lmstar}, the statement is equivalent to prove that 
$u_t\geq 0$ in $\rn$ for $t>1$.

Observe that $u_t\equiv 0$ in $D^c$ and $u_t(x) \ge 0$ for $x \in D \setminus (t^{-1} D)$. Put $D_t=(t^{-1}D)\setminus\{0\}$. 

Fix $t > 1$.
Put 
$$
h_D(x) = \int_{D^c} K_{\alpha}(z) \omega_D^x(dz), \quad x \in D.
$$
It is clear that 
$$
h_{D}(x) \le C_{N,\alpha} (\dist(0,D^c))^{\alpha-N}, \quad x \in D.
$$
By (\ref{Green}) for any $x \in D_t$ we have
$$
u(x) = \frac{C_{N,\alpha}}{|x|^{N - \alpha}} - h_D(x)
$$
and
$$
u_t(x) = \frac{C_{N,\alpha}}{|x|^{N - \alpha}} \left(1 - \frac{1}{t^{N - \alpha}}\right) - h_D(x) + h_D(tx).
$$
It is obvious that there exists $\varepsilon = \varepsilon(N,\alpha,\dist(0,D^c),t) > 0$ such that $\overline{B_{\varepsilon}(0)} \subset D_t$ and 
$$
u_t(x) > 0 \quad \text{for} \quad x \in B_{\varepsilon}(0) \setminus\{0\}.
$$

By Lemma \ref{scaling} we get
$$
(-\Delta)^{\alpha/2}u_t(x)=(-\Delta)^{\alpha/2}u(x)-t^{\alpha} \big[(-\Delta)^{\alpha/2} u\big](tx)= 0, 
$$
for $x \in D_t \setminus \overline{B_{\varepsilon}(0)}$.
Since $u_t$ is bounded on $D_t \setminus \overline{B_{\varepsilon}(0)}$, we can apply Lemma \ref{wmp}
to get $u_t\geq 0$ in $\rn$.

Case 2. $1 = N \le \alpha$.

The only bounded convex sets in $\R$ are bounded intervals. By scaling we may assume that $D=(-1,1)$. It is well known (see \cite{BGR1961}) that
\begin{equation}
\label{Greenf}
G_D(x,y) = \cBa |x - y|^{\alpha - 1} \int_0^{w(x,y)} \frac{r^{\alpha/2-1}}{(r+1)^{1/2}} \, dr, \quad \quad x,y \in D, \,\, x \ne y,
\end{equation}
where
$$
w(x,y) = (1-x^2)(1-y^2)/(x-y)^2,
$$
and $\cBa = 1/(2^{\alpha}\Gamma^2(\alpha/2))$. If $1 = N = \alpha$ we have $G_D(x,x) = \infty$, $x \in D$. If $1= N < \alpha$ then $G_D(x,y)$ is bounded and continuous on $D \times D$ and for $x \in D$ we have $G_D(x,x) = (1 - x^2)^{\alpha - 1}/(2^{\alpha-1}\Gamma^2(\alpha/2)(\alpha-1))$ \cite[page 298]{BB2000}.

For $1 = N = \alpha$ the assertion follows by direct computation. Indeed, for $x, y \in D$, $x \ne y$ we have
$$
\frac{\partial}{\partial x} G_D(x,y) = \cBa \frac{(2 x y - 2)(1-y^2)}{(x-y)^3} \frac{w(x,y)^{\alpha/2-1}}{(w(x,y)+1)^{1/2}}.
$$
So the function $x \to G_D(x,y)$ is increasing on $(-1,y)$ and decreasing on (y,1).

Assume now $1= N < \alpha$. Substituting $t = r(x-y)^2$ in (\ref{Greenf}) we obtain
$$
G_D(x,y) = \cBa \int_0^{(1-x^2)(1-y^2)} \frac{t^{\alpha/2-1}}{(t+(x-y)^2)^{1/2}} \, dt, \quad \quad x,y \in D, \,\, x \ne y.
$$
Hence for $x, y \in D$, $x \ne y$ we have
\begin{eqnarray*}
\frac{\partial}{\partial x} G_D(x,y) &=& \cBa \frac{-2x (1 - y^2) ((1-x^2)(1-y^2))^{\alpha/2-1}}{((1-x^2)(1-y^2)+(x-y)^2)^{1/2}} \\
&& + \cBa (y - x) \int_0^{(1-x^2)(1-y^2)} \frac{t^{\alpha/2-1}}{(t+(x-y)^2)^{3/2}} \, dt.
\end{eqnarray*}
So for $x \in D \setminus \{y\}$ such that $|y - x|$ is sufficiently small $\frac{\partial}{\partial x} G_D(x,y)$ behaves like $(y - x) | y - x|^{\alpha - 3}$. In particular, for $x \in D \setminus \{y\}$ such that $|y - x|$ is sufficiently small the function $x \to G_D(x,y)$ is increasing for $x < y$ and decreasing for $x > y$. The rest of the proof is similar to the proof in case $N > \alpha$ and it is omitted.
\end{proof}

\section{Uniform starshapedness}

\begin{lemma}\label{computation}
	Let $D\subset \R^N$ open and $u\in C^{3}(D)\cap \cL^1_{\alpha}\cap W^{1,1}_{loc}(\R^N)$ such that $x\mapsto \langle x, \nabla u(x)\rangle\in C^2(D)$ and
$$
\int_{\rn} \frac{|\langle x, \nabla u(x)\rangle|}{(1 + |x|)^{N + \alpha}} \, dx < \infty.
$$
Then 
	$$
	(-\Delta)^{\alpha/2}\langle x, \nabla u\rangle=\alpha (-\Delta)^{\alpha/2}u+ \langle x, \nabla (-\Delta)^{\alpha/2} u \rangle \quad\text{ on $D$.}
	$$
\end{lemma}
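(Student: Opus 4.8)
The plan is to read the asserted identity as the derivative at $t=1$ of the scaling relation of Lemma~\ref{scaling}. Indeed $v:=\langle\,\cdot\,,\nabla u\rangle$ is the infinitesimal generator of dilations, $v(x)=\frac{d}{dt}\big|_{t=1}u(tx)$ for $x\in D$, and $(-\Delta)^{\alpha/2}$ behaves simply under dilations, so differentiating that behaviour at $t=1$ should produce both $\alpha(-\Delta)^{\alpha/2}u$ and $\langle x,\nabla(-\Delta)^{\alpha/2}u\rangle$. Throughout, fix $x\in D$, set $g:=(-\Delta)^{\alpha/2}u$, which is well defined pointwise on $D$ since $u\in C^{2}(D)\cap\calL$, and for $t$ close to $1$ put $u_t:=u(t\,\cdot\,)$; one checks easily that $u_t\in\calL$, hence also $w_t:=(u_t-u)/(t-1)\in\calL$.

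I would compute $\frac{d}{dt}\big|_{t=1}(-\Delta)^{\alpha/2}u_t(x)$ in two ways. First, Lemma~\ref{scaling} (applied on a bounded neighbourhood of $x$) gives $(-\Delta)^{\alpha/2}u_t(x)=t^{\alpha}g(tx)$, so I need $g$ to be $C^1$ near $x$; this follows by differentiating under the integral sign in the pointwise formula for $g$, the near-singularity part of the integrand being dominated by $C|z|^{2-N-\alpha}$, which is integrable because $\alpha<2$, thanks to $u\in C^{3}(D)$, and the far part — rewritten with first-order differences — being controlled by $u\in\calL$. Consequently $t\mapsto t^{\alpha}g(tx)$ is differentiable at $t=1$ with derivative $\alpha g(x)+\langle x,\nabla g(x)\rangle$, which is exactly the right-hand side of the lemma. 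Second, linearity of $(-\Delta)^{\alpha/2}$ gives $\bigl((-\Delta)^{\alpha/2}u_t(x)-(-\Delta)^{\alpha/2}u(x)\bigr)/(t-1)=(-\Delta)^{\alpha/2}w_t(x)$, and the identity $u(ty)-u(y)=\int_0^{t-1}\langle y,(\nabla u)((1+\sigma)y)\rangle\,d\sigma$ shows $w_t\to v$, in fact in $C^{2}(\overline{B_\rho(x)})$ for any $\overline{B_\rho(x)}\subset D$, using $u\in C^{3}(D)$. Hence the lemma reduces to the claim $(-\Delta)^{\alpha/2}w_t(x)\to(-\Delta)^{\alpha/2}v(x)$ as $t\to1$, which then identifies $\frac{d}{dt}\big|_{t=1}(-\Delta)^{\alpha/2}u_t(x)$ with $(-\Delta)^{\alpha/2}v(x)$.

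To prove that convergence I would use the symmetrized representation $(-\Delta)^{\alpha/2}\phi(x)=-\tfrac{c_{N,\alpha}}{2}\int_{\rn}\frac{\phi(x+z)+\phi(x-z)-2\phi(x)}{|z|^{N+\alpha}}\,dz$, valid for $\phi\in\{w_t,v\}$ since these are $C^2$ near $x$ and lie in $\calL$, and split the integral at $|z|=\rho$ with $\overline{B_\rho(x)}\subset D$. On $\{|z|<\rho\}$ the bound $|w_t(x+z)+w_t(x-z)-2w_t(x)|\le C|z|^{2}$, uniform in $t$ by the $C^2$-bounds on $w_t$, together with the pointwise convergence $w_t\to v$, lets dominated convergence do the job. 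On $\{|z|\ge\rho\}$ I would change variables $y=x\pm z$ and use $\langle y,(\nabla u)(\tau y)\rangle=\tau^{-1}v(\tau y)$ to write $w_t(y)$ as an average of rescaled copies of $v$; then, splitting further into $\{|y|\le R\}$ and $\{|y|>R\}$, the inner region is handled via $\|w_t-v\|_{L^1(B_R)}\to0$ (here $v\in L^1_{loc}$ because $u\in W^{1,1}_{loc}$, and dilations act continuously on $L^1$), while on the outer region $|y-x|^{-N-\alpha}\le C|y|^{-N-\alpha}$ and the two integrability hypotheses $u\in\calL$ and $\int_{\rn}\frac{|\langle y,\nabla u(y)\rangle|}{(1+|y|)^{N+\alpha}}\,dy<\infty$ make the contribution small uniformly in $t$ once $R$ is large.

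The main obstacle is precisely this interchange of the $t$-derivative with the nonlocal operator: it is the only non-formal step, it uses every hypothesis of the lemma — $C^3(D)$ for the singular part, and $u\in\calL$, $v\in\calL$, $u\in W^{1,1}_{loc}$ for the tail — and it requires some care because the domains of integration obtained after the change of variables depend on $t$. Once it is established, equating the two expressions for $\frac{d}{dt}\big|_{t=1}(-\Delta)^{\alpha/2}u_t(x)$ yields the identity, and the remainder is routine bookkeeping.
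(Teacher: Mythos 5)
Your proposal takes the same route as the paper: both read the identity as $\partial_t\big(t^\alpha[(-\Delta)^{\alpha/2}u](tx)\big)\big|_{t=1}=\partial_t\big((-\Delta)^{\alpha/2}u(t\,\cdot)\big)(x)\big|_{t=1}$ via Lemma \ref{scaling}, reduce to interchanging $\partial_t$ with $(-\Delta)^{\alpha/2}$ applied to $v(t,\cdot)=u(t\,\cdot)$, and justify the interchange by splitting the nonlocal integral into a singular piece near $0$ (where $u\in C^3$ gives an $O(|z|^{2-N-\alpha})$ domination) and a tail (where $u\in\calL$, $\langle\cdot,\nabla u\rangle\in\calL$ and $u\in W^{1,1}_{loc}$ control the difference quotients). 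The only cosmetic differences are that you phrase the difference quotient at the level of the functions ($w_t=(u_t-u)/(t-1)$ and $(-\Delta)^{\alpha/2}w_t(x)\to(-\Delta)^{\alpha/2}v(x)$) whereas the paper writes it at the level of the integrand ($a_h\to\partial_t a$); and you argue the tail by a change of variables and an inner/outer split in $|y|$, while the paper invokes a single dominating function $K|\partial_t v(t,y)|/(1+|y|^{N+\alpha})$. You are also slightly more explicit in observing that $g=(-\Delta)^{\alpha/2}u$ must be $C^1$ near $x$ for the left-hand computation to make sense, a point the paper leaves implicit (it is actually delivered by the same interchange). These are bookkeeping variations, not a different method.
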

\begin{proof}
	Note that if $\nabla u(x)$ exists then $\partial_t u(tx)|_{t=1}=\langle x, \nabla u(x)\rangle$. By Lemma \ref{scaling} we have for $x\in D$
	\begin{align*}
	\alpha (-\Delta)^{\alpha/2}u(x)+ \langle x, \nabla (-\Delta)^{\alpha/2} u(x)\rangle&=\partial_t \big(t^{\alpha}[(-\Delta)^{\alpha/2}u](tx)\big) |_{t=1}\\
	&=\partial_t \big((-\Delta)^{\alpha/2} u(tx)\big)|_{t=1}.
	\end{align*}
	Hence, it is enough to show that the function $v(t,x)=u(tx)$, $t>0$, $x\in \R^N$ satisfies
	\begin{equation}\label{interchange-claim}
	\partial_t  (-\Delta)^{\alpha/2} v(t,x) = (-\Delta)^{\alpha/2}\partial_tv(t,x) \quad \text{ for $ t>0$,  $x\in t^{-1}D$.}
	\end{equation}
	The argument will be similar to \cite[Proposition B.2]{AJS16}. By the regularity of $u$, we have $v(t,\cdot)\in C^3(t^{-1}D)\cap \cL^1_{\alpha}\cap W^{1,1}_{loc}(\R^N)$, $\partial_t v(t,\cdot)\in C^2(t^{-1}D)\cap \cL^1_{\alpha}$ for every $t>0$, and thus for $t>0$, $x\in t^{-1}D$
	\[
	(-\Delta)^{\alpha/2} v(t,x)=\frac{c_{N,\alpha}}{2}\int_{\R^N} \frac{2v(t,x)-v(t,x+y)-v(t,x-y)}{|y|^{N+\alpha}}\ dy.
	\]
	Define $a,a_h:\{(t,x)\;:\; t>0,\ x\in t^{-1}D\}\times \R^N\setminus \{0\}\to \R$ as
	\[
	a(t,x,y)= \frac{2v(t,x)-v(t,x+y)-v(t,x-y)}{|y|^{N+\alpha}},\quad a_h(t,x,y)=\frac{a(t+h,x,y)-a(t,x,y)}{h},\quad h\in \R\setminus\{0\}
	\]
	and fix $t>0$, $x\in t^{-1}D$. Since $D$ is open, we may fix $U=B_H(0)$, $H>0$, $H < t/2$ such that $(t+h)(x+y)\in D$ for all $y\in U$ and $h\in(-H,H)$. We will show separately
	\begin{align}
	\lim_{h\to 0} \int_{U}a_h(t,x,y)\ dy&= \int_{U}\partial_t a(t,x,y)\ dy\quad \text{ and } \label{interchange:part1}\\
	\lim_{h\to0}  \int_{U^c}a_h(t,x,y)\ dy&= \int_{U^c}\partial_t a(t,x,y)\ dy. \label{interchange:part2}
	\end{align}
	By the Mean Value Theorem, for every $0<|h|<H$ and $y \in B_H(0)$ there is $\xi\in (-|h|,|h|)$ such that $a_h(t,x,y)=\partial_t a(t + \xi,x,y)$. Hence
	\[
	|a_h(t,x,y)|\leq c(t) \|u\|_{C^3(U)}|y|^{2-\alpha-N}\in L^1(U) \quad \text{ for all $0<|h|<H$,}
	\]
	where we used the $C^2$ estimate of $\partial_t v(t,\cdot)$ as given in \cite[Lemma B.1]{AJS16}. Hence \eqref{interchange:part1} holds by the Dominated Convergence Theorem. To see \eqref{interchange:part2}, denote $A:=\{y\in \R^N\;:\; |x-y|\geq \epsilon\}$ for $\epsilon>0$ and note that there is $K>0$, depending on $u$, $\epsilon$, $N$, $\alpha$, $x$, and $U$, so that
	\[
	\Bigg|\frac{\partial_t v(t,y)}{|x-y|^{N+\alpha}}1_A(y)\Bigg| = \frac{|\partial_t v(t,y)|}{1+|y|^{N+\alpha}} \frac{1+|y|^{N+\alpha}}{|x-y|^{N+\alpha}}1_A(y)\leq K\frac{|\partial_t v(t,y)|}{1+|y|^{N+\alpha}} 1_A(y)=:f(y).
	\]
	Indeed, the existence of $K$ is clear for $|x-y|\geq \frac{|y|}{2}$ and if $|x-y|< \frac{|y|}{2}$, then $\frac{|y|}{2}\leq |x|$ and hence $K$ can be chosen depending on $x$. Hence, since $f\in L^1(\R^N)$, we have by the Dominated Convergence Theorem
	\begin{align*}
	\lim_{h\to0} \int_{U^c} \frac{v(t+h,x\pm y) -v(t,x\pm y)}{h|y|^{N+\alpha}}\ dy&=  \int_{U^c}\lim_{h\to0} \frac{v(t+h,x\pm y) -v(t,x\pm y)}{h} \frac{1}{|y|^{N+\alpha}}\ dy\\
	&=\int_{U^c} \frac{\partial_tv(t,x\pm y)}{|y|^{N+\alpha}}\ dy
	\end{align*}
	using the fact that $u\in W^{1,1}_{loc}(\R^N)$.	Moreover, trivially
	\[
	\lim_{h\to0} \int_{U^c} \frac{v(t+h,x) -v(t,x)}{h|y|^{N+\alpha}}\ dy= \int_{U^c} \frac{\partial_tv(t,x)}{|y|^{N+\alpha}}\ dy
	\]
	and thus \eqref{interchange:part2} holds. Finally, \eqref{interchange:part1}, \eqref{interchange:part2} immediately imply \eqref{interchange-claim} and this finishes the proof.
\end{proof}

\begin{proof}[Proof of Theorem \ref{uniform-thm}]
Put $D = D_0 \setminus \overline{D_1}$ and $w(x) = \langle x, \nabla u(x)\rangle$. Recall that $\delta_D(x) =\dist(x,D^c)$. By $\nu(x)$ we denote the exterior unit normal for $D_0$ at $x \in \partial D_0$ and the exterior unit normal for $D_1$ at $x \in \partial D_1$. By the uniform starshapedness assumption, for any $x \in \partial D$ we have $\langle x, \nu(x)\rangle \ge \varepsilon$ for some $\varepsilon > 0$. Note that it remains to show that $|\nabla u(x)| > 0$ and there exists $c > 0$ such that 
\begin{equation}
\label{nabla_ineq}
\frac{w(x)}{|\nabla u(x)|} \le -c \quad \text{for any $x \in D$.}
\end{equation}
By well known properties of $C^{1,1}$ domains there exists $r_1 > 0$ such that for all $x \in D$ with $\delta_D(x) \le r_1$ there exists a unique point $x^* \in \partial D$ such that $|x-x^*| = \delta_D(x)$. For any $x \in D$ such that $\delta_D(x) \le r_1$ put $\nu(x) = \nu(x^*)$. There exists $r_2 \in (0,r_1]$ such that for all $x \in D$ with $\delta_D(x) \le r_2$ we have $\langle x, \nu(x)\rangle \ge \varepsilon/2$. By \cite[Lemma 4.5]{BKN2002} and standard arguments as in \cite[proof of Lemma 3.2]{K2014} there exists $r_3 \in (0,r_2]$ and $c > 0$ such that for all $x \in D$ with $\delta_D(x) \le r_3$ we have $\langle \nu(x), \nabla u(x)\rangle \le - c \delta_D^{\alpha/2 -1}(x)$, and $|\langle t(x), \nabla u(x)\rangle| \le c \max(\delta_D^{1-\alpha/2}(x),\delta_D^{\alpha/2}(x))  |\log(\delta_D(x))|$, for any vector $t(x)$ perpendicular to $\nu(x)$. Hence there there exists $r_4 \in (0,r_3]$ such that for all $x \in D':=\{x\in D\;:\; \delta_D(x) \le r_4\}$ we have $|\nabla u(x)| > 0$ and 
\begin{equation}
\label{nabla_b}
\frac{w(x)}{|\nabla u(x)|} \le -\varepsilon/4.
\end{equation} 
Now we will use Lemma \ref{computation} for $u$. By similar arguments as in the proof of \cite[Proposition 1.1]{RS12} we get $u \in C^{\alpha/2}(\rn)$. Other assumptions on $u$ in Lemma \ref{computation} are clearly satisfied. By this lemma we obtain $(-\Delta)^{\alpha/2} w(x) = 0$ in $D\setminus D'$. We also have $w \leq 0$ on $\inn( D^c\cup D')$ and $|\{w<0\}\cap D'|>0$ by \eqref{nabla_b}. Since $D_0$ is bounded, Lemma \ref{wmp} implies $w<0$ in $D\setminus D'$. Hence with \eqref{nabla_b} and the continuity of $w$ in $\overline{D\setminus D'}$ there exists some $c_1 > 0$ such that $w(x) \le - c_1$ in $D$ implying (\ref{nabla_ineq}).
\end{proof}

\end{document}